\newtheorem{theorem}{Theorem}[section]
\newtheorem{lemma}[theorem]{Lemma}
\theoremstyle{definition}
\newtheorem{definition}[theorem]{Definition}
\newtheorem{proposition}[theorem]{Proposition}
\newtheorem{corollary}[theorem]{Corollary}
\theoremstyle{remark}
\newtheorem{remark}[theorem]{Remark}
\numberwithin{equation}{section}
\newcommand{\R}{\mathbb{R}}
\newcommand{\Z}{\mathbb{Z}}
\newcommand{\de}{\delta }
\newcommand{\cp}{\mathcal{P}}
\begin{document}

\title{On Legendrian Graphs}

\author{Danielle O'Donnol}
\address{Department of Mathematics, Rice University, Houston, Texas 77005}
\email{Danielle.S.Odonnol@rice.edu}

\author{Elena Pavelescu}
\address{Department of Mathematics, Rice University, Houston, Texas 77005}
\email{Elena.Pavelescu@rice.edu}

\subjclass[2010]{Primary 57M25, 57M50;  Secondary 05C10}

\date{\today}

\keywords{Legendrian graphs, $K_4$, Thurston-Bennequin number, rotation number}

\begin{abstract}
We investigate Legendrian graphs in $(\R^3, \xi_{std})$. 
We extend the classical invariants, Thurston-Bennequin number and rotation number to Legendrian graphs. 
We prove that a graph can be Legendrian realized with all its cycles Legendrian unknots with $tb=-1$ and $rot=0$ if and only if it does not contain $K_4$ as a minor. 
We show that the pair $(tb, rot)$ does not characterize a Legendrian graph up to Legendrian isotopy if the graph contains a cut edge or a cut vertex.
For the lollipop graph the pair $(tb,rot)$ determines two Legendrian classes and for the handcuff graph it determines four Legendrian classes. 

\end{abstract}

\maketitle


\section{Introduction}\label{intro}
In this paper we begin the systematic study of Legendrian graphs in $\R^3$ with the standard contact structure. These are embedded graphs that are everywhere tangent to the contact planes.  
Legendrian graphs have appeared naturally in several important contexts in the study of contact manifolds.  
They are used in Giroux's proof of existence of open book decompositions compatible with a given contact structure, see \cite{G}. 
Legendrian graphs also appear in the proof of Eliashberg's and Fraser's result which says that in a tight contact structure the unknot is determined up to Legendrian isotopy by the invariants $tb$ and $rot$, see \cite{EF}. 
 Yet no study of Legendrian graphs, until now, has been undertaken.  We remedy this by establishing the foundations for what we expect will be a very rich field.  

A \emph{spatial graph} is an embedding of a graph into $\R^3.$  An \emph{abstract graph} is a set of vertices together with a set of edges between them, without any specified embedding.  We will throughout this paper refer to an abstract graph as simply a graph.  In Section \ref{graphs}, we show there is no obstruction to having a Legendrian realization of any spatial graph. 
 We extend the classical invariants Thurston-Bennequin number, $tb$, and rotation number, $rot$, from Legendrian knots to Legendrian graphs.  

In \cite{M}, Mohnke proved that the Borromean rings and the Whithead link cannot be represented by Legendrian links of \emph{trivial unknots}, that is unknots with $tb=-1$ and $rot=0$.  The trivial unknot is the one unknot among all unknots that attains the maximal Thurston-Bennequin number of its topological class.
As an application of our invariants, we ask which graphs admit Legendrian embeddings with all cycles trivial unknots. 
In Section \ref{maxtb}, we give a full characterization of these graphs, in the form of the following:

\begin{theorem}  A graph $G$ admits a Legendrian embedding in  $(\R^3, \xi_{std})$ with all its cycles trivial unknots if and only if $G$ does not contain $K_4$ as a minor.  
\end{theorem}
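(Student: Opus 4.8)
The plan is to prove the two implications separately, after first reducing the ``only if'' direction to the single graph $K_4$. If $G$ contains $K_4$ as a minor then, because a minor of $K_4$-size arises from four disjoint connected branch sets joined pairwise by edges, $G$ contains a subdivision of $K_4$ as a subgraph, and every cycle of this topological $K_4$ is a cycle of $G$. Since subdividing an edge (inserting a valence-two vertex) changes neither the knot type of a cycle nor its invariants, it suffices to show that \emph{no} Legendrian $K_4$ can have all seven of its cycles (the four triangles and three $4$-cycles) be trivial unknots. Conversely, I will use the classical fact from graph minor theory (Dirac--Duffin) that $G$ has no $K_4$-minor if and only if every block of $G$ is series-parallel, equivalently $G$ is built from single edges by series and parallel compositions inside each block, with blocks glued along cut vertices.

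For the \emph{sufficiency} direction I would argue by induction on the series-parallel decomposition, maintaining a stronger invariant: realize each two-terminal piece, with terminals $s$ (leftmost) and $t$ (rightmost), by a front projection in which every $s$--$t$ path is a \emph{crossingless, cusp-free, strictly $x$-monotone} arc. A single edge is the base case. A series composition concatenates two such pieces left-to-right and creates no new cycle, so the invariant and the triviality of all cycles persist. A parallel composition stacks one piece entirely above the other, sharing only $s$ and $t$; the only new cycles are $P_1\cup P_2$ with $P_i$ an $s$--$t$ path in the $i$th piece. Each such union is a convex bigon with exactly one left cusp at $s$ and one right cusp at $t$, monotone in between and embedded (top arc above bottom arc), i.e.\ precisely the standard front of the unknot with $tb=0-\tfrac12\cdot 2=-1$ and $rot=0$. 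Cycles internal to either piece are unchanged and trivial by induction, and forests/cut-vertex gluings contribute no cycles. Hence every cycle of $G$ is realized as a standard trivial unknot.

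The \emph{necessity} direction is the crux and the main obstacle. A first attempt via rotation numbers fails: writing $2\pi\,rot(C)=\sum_{e\in C}\pm\rho(e)+\sum_{v\in C}\tau_v^{C}$, where $\rho(e)$ is the turning of the tangent along $e$ and $\tau_v^{C}$ the exterior angle in $\xi_v$, one checks that for any relation $\sum a_iC_i=0$ in the cycle space (for instance the coherent sum of the four triangles, where $\partial\partial\Delta^3=0$ cancels all edge terms) the resulting identity is automatically satisfied modulo $2\pi$; the surviving vertex sums are $\pm\pi$ at each valence-three vertex, and since any rotation system at the vertices is realizable, the constraint $\sum_v(\pm\pi)=0$ can be met. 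Thus $rot$ alone cannot obstruct, and the argument must genuinely use $tb$ together with the fact that the cycles are \emph{unknots}. The plan is therefore to invoke the Eliashberg--Fraser classification: a cycle with $tb=-1$ that is topologically unknotted is the maximal-$tb$ unknot, so it bounds a convex disk whose dividing set is a single boundary-parallel arc. I would then study the three triangles meeting at a fixed vertex, whose disks share the three Legendrian edges through that vertex, and the remaining triangle; along each shared Legendrian edge the dividing arcs of the adjacent disks must match, and assembling the four standard disks forces a closed convex configuration whose dividing set is incompatible with tightness of $(\R^3,\xi_{std})$ (by Giroux's criterion it would yield a disconnected dividing curve on a sphere, hence an overtwisted disk).

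The hard part will be making this gluing precise: arranging the four bounding disks to be simultaneously convex and to meet cleanly along the shared Legendrian edges (and at the valence-three vertices, which are singular points of the assembled surface), and then extracting the tightness contradiction uniformly over \emph{all} vertex rotation systems rather than just the planar one. If the convex-surface bookkeeping proves unwieldy, a fallback is a direct front-diagram count using $tb(C)=w(C)-\tfrac12 c(C)$ together with the parity observation that the number of triangle-corners at each vertex that are cusps is always odd; one would combine these seven equations with the genus-zero (disk) constraint to force an arithmetic contradiction. Either way, the essential new input beyond the sufficiency construction is that maximality of $tb$ for an unknot is rigid, and it is this rigidity, propagated through the shared edges of $K_4$, that I expect to defeat all seven cycles being trivial at once.
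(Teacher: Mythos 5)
Your proposal splits into two halves, and they fare differently.

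The necessity direction, as you primarily propose it, has a genuine gap. You want to take the four disks bounded by the four triangles of a Legendrian $K_4$ (each convex with a single boundary-parallel dividing arc, since each triangle is a $tb=-1$ unknot) and assemble them along the shared Legendrian edges into a closed surface violating Giroux's criterion. But nothing in the hypothesis controls the \emph{mutual} position of these disks: each triangle bounds \emph{some} disk, yet for an arbitrary Legendrian embedding of (a subdivision of) $K_4$ these disks will in general intersect one another and the rest of the graph in their interiors, and the abstract tetrahedron $2$-complex need not embed in $\R^3$ at all. Arranging the four disks to be simultaneously convex and disjoint except along their common Legendrian edges, with controlled corners at the valence-three vertices, is precisely the statement you would have to prove, and it is where the proposal stops; as written this route does not go through.

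Your fallback, however, is the right argument --- it is essentially the paper's --- and it should be promoted to the main proof and completed. The paper's key step (Lemma \ref{mod2}) is that for \emph{every} Legendrian embedding of $K_4$ the sum of $tb$ over the seven cycles is even, which contradicts seven trivial unknots summing to $-7$. Your front count proves the same parity: summing $tb(C)=w(C)-\tfrac12 c(C)$ over the seven cycles, each crossing lies on exactly two or four cycles (each edge of $K_4$ lies on four cycles, each pair of edges on two), so the total writhe contribution is even; each cusp interior to an edge lies on four cycles, and each cusp-type corner lies on two cycles, while by your (correct) observation each vertex carries an odd number of cusp-type corners (three if all its edges exit to the same side, one otherwise). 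Hence $\tfrac12\sum_C c(C)=2a+b$, where $a$ is the number of interior cusps and $b$ is a sum of four odd numbers, so this is even and $\sum_C tb(C)\equiv 0 \bmod 2$. No ``genus-zero/disk constraint'' is needed beyond the oddness of $tb=-1$. The paper proves the parity lemma differently, by computing one explicit embedding and showing invariance of the parity under the moves relating any two fronts of $K_4$ (Lemma \ref{two_fronts}); your direct count, once written out as above, is a legitimate and arguably more self-contained alternative.

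The sufficiency direction is correct in outline and genuinely different from the paper's. You outsource the combinatorics to Dirac--Duffin ($K_4$-minor-free iff every block is series-parallel) and induct on series/parallel compositions, keeping all terminal-to-terminal paths crossingless, cusp-free and $x$-monotone; the only new cycles arise in parallel compositions and are embedded ``lenses'' with two cusps and no crossings, i.e.\ unknots with $tb=-1$ and $rot=0$ (indeed $tb=-1$ alone forces $rot=0$ for an unknot, by the Bennequin inequality). The paper instead builds the front by hand from a chosen cycle $C$, with chords nested above and paths nested below, using the $K_4$-free structure directly (Remarks \ref{alternates} and \ref{triangle}) to exclude crossings and stabilized cycles (Theorem \ref{thmGrec}). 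Your induction is shorter and cleaner at the price of citing a graph-theory theorem; the paper's construction is longer but self-contained. Two details to record in your version: cycles never span two blocks or two series pieces, so gluing at cut vertices and bridges is harmless; and any crossings between distinct pieces that arise when placing the fronts in the plane are irrelevant, because the knot type, $tb$ and $rot$ of a cycle depend only on the cycle itself.
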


The proof of this theorem relies partly on the fact that the trivial unknot has an odd Thurston-Bennequin number, that is $tb=-1$.  We prove the reverse implication in more generality, for $L_{odd},$ which represents the set of topological knot classes with odd maximal Thurston-Bennequin number.  
\begin{theorem}
Let $G$  be a graph that contains $K_4$ as a minor. 
There does not exist a Legendrian realization of $G$ such that all its cycles are knots in $L_{odd} $ realizing their maximal Thurston-Bennequin number.
\end{theorem}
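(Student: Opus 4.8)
The plan is to reduce to the case of $K_4$ itself and then extract a parity contradiction by comparing the Thurston--Bennequin numbers of the triangular cycles of $K_4$ with those of its quadrilateral cycles. Since $K_4$ has maximum degree $3$, containing $K_4$ as a minor is equivalent to containing a subdivision of $K_4$, so $G$ contains four branch vertices joined by six internally disjoint paths $P_{ij}$ $(1\le i<j\le 4)$. In any Legendrian realization of $G$, each concatenation of these paths that forms a cycle of the $K_4$-pattern is itself a cycle of $G$, hence by hypothesis a knot in $L_{odd}$ realizing its maximal Thurston--Bennequin number; in particular each such cycle has \emph{odd} $tb$. There are exactly seven of them: the four ``triangles'' $T_1,\dots,T_4$ (the $3$-cycles, e.g.\ $T_1=P_{12}P_{23}P_{13}$) and the three ``quadrilaterals'' $Q_1,Q_2,Q_3$ (the Hamiltonian $4$-cycles). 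It suffices to contradict these seven, so I may treat the branch vertices as the vertices of $K_4$ and the paths as its edges. Note that only the \emph{oddness} of the maximal $tb$ is used; this is exactly the role of $L_{odd}$.

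The heart of the argument is a parity comparison between the two families. A sum of four odd numbers is even and a sum of three odd numbers is odd, so $\sum_{i=1}^4 tb(T_i)$ is even while $\sum_{j=1}^3 tb(Q_j)$ is odd; hence their difference is odd. I would then recompute this difference modulo $2$ from a fixed front, using the front formula $tb(C)=w(C)-\frac{1}{2}c(C)$, where $w(C)$ is the writhe of the cycle $C$ and $c(C)$ its number of cusps, written as $c(C)=c_{\mathrm{path}}(C)+\sum_{v\in C}k_v(C)$: cusps lying on the paths, plus the cusps $k_v(C)$ created when the corner of $C$ at a branch vertex $v$ is rounded. Three cancellations then occur, governed purely by the incidence combinatorics of $K_4$.

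First, the path-cusp terms cancel because each path lies on exactly two triangles and exactly two quadrilaterals. Second, the corner-cusp terms cancel because the number of cusps produced at a corner depends only on the unordered pair of path-germs joined there, and at each branch vertex the three triangles through it and the three quadrilaterals through it each realize the three germ-pairs exactly once; so $\sum_i\sum_{v}k_v(T_i)=\sum_j\sum_{v}k_v(Q_j)$. Third, the writhe terms agree modulo $2$: a self-crossing of a path, or a crossing of two opposite (vertex-disjoint) paths, is counted an even number of times within each family, whereas a crossing of two adjacent paths is counted exactly once within each family, giving $\sum_i w(T_i)\equiv\sum_j w(Q_j)\pmod 2$. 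Consequently $\sum_i tb(T_i)-\sum_j tb(Q_j)$ is even, contradicting the odd value forced above.

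I expect the main obstacle to be the careful treatment of the vertex contributions. The clean cancellations above rely on making precise, from the definitions of Section \ref{graphs}, that the corner at a branch vertex is rounded \emph{locally}, so that $k_v(C)$ depends only on the two path-germs at $v$ (not on the global cycle or its orientation) and introduces no spurious crossings into $w(C)$. Once the contributions to $tb$ are organized as writhe, path-cusps, and corner-cusps, the $K_4$ incidence data, each edge on two triangles and two quadrilaterals, and each vertex's three germ-pairs hit once by each family, makes all three cancellations automatic, and the odd-versus-even clash finishes the proof. The reduction from a $K_4$ minor to a $K_4$ subdivision, and the verification that crossings between distinct paths behave exactly like crossings between edges of $K_4$, are routine but should be stated explicitly.
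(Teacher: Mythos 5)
Your proposal is correct, and its skeleton matches the paper's: reduce to a subdivision of $K_4$ (possible since $K_4$ has maximum degree three), then contradict a mod-$2$ identity among the Thurston-Bennequin numbers of its seven cycles. Note that your parity identity is in fact equivalent to the paper's: since a difference and a sum agree modulo $2$, the statement that $\sum_i tb(T_i)-\sum_j tb(Q_j)$ is even is precisely Lemma \ref{mod2}, which says that the sum of $tb$ over all seven cycles is even. What is genuinely different is how you prove it. The paper anchors the computation at one explicit Legendrian embedding of $K_4$ (where the sum is $-8$) and then uses Lemma \ref{two_fronts}: any other realization is reached by changes supported on one edge, two adjacent edges, or two non-adjacent edges, each of which shifts the $tb$ of every cycle containing the affected edge(s) by a common amount; since each edge lies in four cycles and each pair of edges in two, the parity of the sum is preserved. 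You instead compute the parity directly from an arbitrary front via $tb=\mathrm{writhe}-\frac{1}{2}\#\mathrm{cusps}$, splitting cusps into path-cusps and corner-cusps and crossings into self/adjacent/opposite types, and cancelling via the refined incidence data of $K_4$ (each path in two triangles and two quadrilaterals; the three germ-pairs at each branch vertex realized once by each family; adjacent pairs in one cycle of each family; opposite pairs in zero triangles and two quadrilaterals) -- all of which checks out. Your route buys self-containedness: it avoids the paper's soft classification of how two realizations can differ, whose proof is rather informal. The price is exactly the point you flag: one must justify that the front formula applies to piecewise-smooth cycles, i.e., that rounding a corner contributes a cusp count depending only on the unordered germ-pair at the vertex and introduces no spurious crossings; this is where the paper's standard-smoothing discussion (Proposition \ref{isosmoothing}) supplies the needed local analysis, so you should cite or reproduce it there.
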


It is known that certain Legendrian knots and links are determined by the invariants $tb$ and $rot$: the unknot (see \cite{EF}), the torus knots and the figure eight knot (see \cite{EH}), the links consisting of an unknot and a cable of that unknot (see \cite{DG}). 
In Section \ref{Classification}, we ask what types of spatial graphs are classified up to Legendrian isotopy by the pair $(tb, rot)$.  
We show that the pair $(tb, rot)$ does not classify graphs which contain both cycles and either cut vertices or cut edges, independent of the chosen topological class. 
This means that even uncomplicated graphs carry more information than the set of knots represented by their cycles. 

In order to have a classification by the pair $(tb, rot)$, we must first narrow to a specific topological class.  We investigate \emph{topological planar graphs}, that is an embedded graph that is ambient isotopic to a plannar embedding.  
Not to be confused with \emph{planar graphs}, which refers to an abstract graph that has a planar embedding.  In the case of the handcuff graph, for topological planar graphs, we prove there are exactly four Legendrian realizations for each pair $(tb, rot)$.  For the lollipop graph, for topological planar graphs, we prove there are exactly two Legendrian realizations for each pair $(tb, rot)$.

\subsection{Aknowledgements}
The authors would like to thank Tim Cochran for his support and interest in the project.  
They would also like to thank John Etnyre for helpful conversations.


\section{Background}\label{backg}

\subsection{Spatial Graphs}\label{spatialgrph}
A \emph{spatial graph} is an embedding $f$ of a graph $G$ into $\R^3$ (or $S^3$), also called a \emph{spatial embedding} or \emph{graph embedding}.  
We remind the reader that an abstract graph, one without an embedding well be referred to as simply a graph.  We will be considering spatial graphs in $\R^3$ throughout this paper.  
Two spatial graphs $f(G)$ and $\bar{f}(G)$ are \emph{ambient isotopic} if there exits an isotopy $h_t:\R^3\to \R^3$ such that $h_0=id$ and  $h_1(f(G))=\bar{f}(G).$  
Similar to knots, there is a set of Reidemeister moves for spatial graphs, described by Kauffman in \cite{K}.  

Here we remind the reader of some basic graph theoretic terminology.  
We are considering the most general of graphs, so there can be  \emph{multi-edges} 
(edges that go between the same pair of vertices), and \emph{loops} (edges that connect a vertex to itself).  
The \emph{valence} of a vertex is the number of endpoints of edges at the given vertex. Two vertices are \emph{adjacent} if 
there is an edge between them.  The complete graph on $n$ vertices, denoted $K_n$, is the graph with $n$ vertices were 
every vertex is adjacent to every other vertex, with exactly one edge between each pair of vertices.  Two edges are \emph{adjacent} if they share a vertex.  
A graph $H$ is a \emph{minor} of $G$ if $H$ can be obtained from a subgraph of $G$ by a finite number  of edge contractions. 

Since we are considering topological questions about Legendrian graphs throughout this paper, it is important to be aware of intrinsic properties of graphs.  
A property is called \emph{intrinsic} if every embedding of $G$ in $\R^3$ (or $S^3$) has the property.  
A graph $G$ is \emph{minor minimal} with respect to a property if $G$ has the property but no minor of $G$ has the property.  Such properties are characterized by their full set of minor minimal graphs.

A spatial graph is said to \emph{contain a knot (or link)} if the knot (or link) appears as a subgraph of $G$.  
A graph, $G$, is \emph{intrinsically knotted} if every embedding of $G$ into $\R^3$ (or $S^3$) contains a nontrivial knot. 
A link $L$ is \emph{split} if there is an embedding of a $2-$sphere $F$ in $\R^3 \setminus L$ such that each component of  $\R^3\setminus F$  contains at least one component of $L$.
A graph, $G$, is \emph{intrinsically linked} if every embedding of
$G$ into $\R^3$ (or $S^3$) contains a nonsplit link.   
The combined work of Conway and Gordon \cite{CG}, Sachs \cite{S}, and  Robertson, Seymour, and Thomas \cite{RST} fully characterized intrinsically linked graphs.
They showed that the Petersen family is the complete set of minor minimal intrinsically linked graphs.  That is no minor of the 
Petersen family is intrinsically linked and any intrinsically linked graph contains a graph in the Petersen family as a minor. 
Unlike the intrinsically linked graphs, the set of intrinsically knotted graphs has not been characterized.  
However, there are many graphs that are known to be minor minimal intrinsically knotted.  
As consequence of how intrinsically linked graphs were characterized, it is known that all intrinsically knotted graphs are intrinsically linked, see \cite{RST}.  

As we examine Legendrian graphs and their properties, it is important to be aware of what  topological properties (without restricting to Legendrian embeddings) are known to be intrinsic to the graph.  
These issues will be present when considering Theorem \ref{thmG} and Corollary \ref{unknot}.

\subsection{Legendrian Knots}
Let M be an oriented 3-manifold and $\xi$ a 2-plane field on $M$.
Then $\xi$ is a \textit{contact structure} on $M$ if  $\xi =\ker \alpha$ for some $1-$form $\alpha$ on $M$ satisfying $\alpha\wedge d\alpha > 0.$

On $\mathbb{R}^3$, the $1-$form $\alpha= dz-ydx$ defines a contact structure  called the standard contact structure, $\xi_{std}$. 
There is a diffeomeorphism of $\mathbb{R}^3$ taking the standard contact structure, $\xi_{std}$, to the symmetric contact structure, $\xi_{sym},$ given in cylindrical coordinates by $\alpha_1= dz+r^2d\theta$. In this paper, we switch between $\xi_{std}$ and  $\xi_{sym}$  when convenient. 
Darboux's theorem says that any contact structure on a manifold $M$ is locally diffeomorphic to $\xi_{std}$.

A curve $\gamma \subset (M, \xi)$ is called \textit{Legendrian} if for all $p\in\gamma$ and $\xi_p$ the contact plane at $p$,  $T_p\gamma \in \xi_p$.

In $(\R^3, \xi_{std})$, Legendrian curves, in particular knots and links, are studied via projections, and a common projection is the front projection (on $xz-$plane).
The Legendrian condition implies that $y =dz/dx$ (i.e. the $y-$coordinate can be recovered as the slope in the $xz-$plane), and so front projections of Legendrian knots do not have vertical tangencies. 
Figure \ref{unknot_trefoil} shows two front projections of Legendrian unknots and that of a Legendrian right-handed trefoil.  
Since the positive $y$-axis points inside the page, at each crossing the overstrand is always the one with smaller slope.


\begin{figure}[htpb!]
\begin{center}
\begin{picture}(385, 72)
\put(0,0){\includegraphics{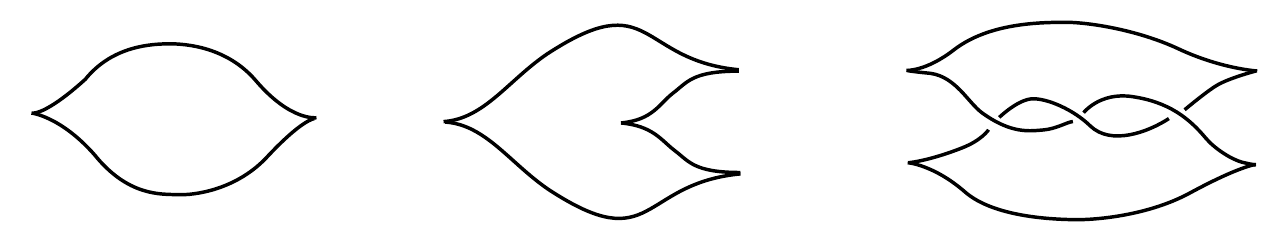}}
\end{picture}
\caption{Legendrian unknots and Legendrian right-handed trefoil}\label{unknot_trefoil}
\end{center}
\end{figure}

Apart from the topological knot class, there are two classical invariants of Legendrian knots, the Thurston-Bennequin number, $tb$, and the rotation number, $rot$. 
The Thurston-Bennequin number measures the amount of twisting of the contact planes along the knot and does not depend on the chosen orientation of $K$. 
To compute the Thurston-Bennequin number of a Legendrian knot $K$ consider a non-zero vector field $v$ transverse to $\xi$, take $K'$ the push-off of $K$ in the direction of $v$, and define $tb(K):= lk(K,K').$ If $K$ is null-homologous, $tb(K)$ measures the twisting of the contact framing on $K$ with respect to the Seifert framing.  
For a Legendrian knot $K$,  $tb(K)$ can be computed from its front projection $\tilde{K}$ as $$tb(K) = \rm{writhe(\tilde{K})} - \frac{1}{2}\# cusps (\tilde{K}).$$

The rotation number, $rot(K)$, is only defined for oriented null-homologous knots, so assume $K$ is oriented and $K=\partial \Sigma$, where $\Sigma\subset M$ is an embedded oriented surface. 
The contact planes when restricted to $\Sigma$ form a trivial $2-$dimensional bundle, and the trivialization of $\xi | _{\Sigma}$ induces a trivialization on  $\xi | _L = L\times \mathbb{R}^2$.  
Let $v$ be a non-zero vector field tangent to $K$ pointing in the direction of the orientation on $K$.  
The winding number of $v$ about the origin with respect to this trivialization is the rotation number of $K$, $rot(K)$ . 
In $\R^3$, the vector fields $d_1=\frac{\partial}{\partial y}$ and $d_2=-y\frac{\partial }{\partial z}-\frac{\partial}{\partial x}  $ define a positively oriented trivialization for $\xi_{std}$. 
Therefore, $rot(K)$ can be computed by counting with sign (  + for counterclockwise and $-$ for clockwise) how many times the positive tangent vector to $K$ crosses $ d_1$ as we travel once around $K$.  
The tangent vector aligns with with one of the vectors $ d_1$ or $-d_1$ at the points corresponding to cusps in the front projection, $\tilde{K}$, and one can check that $$rot(K) = \frac{1}{2}(\rm{\# down \hspace{0.05in}  cusps -\# up   \hspace{0.05in} cusps})(\tilde{K}).$$

Given a Legendrian knot $K$, Legendrian knots in the same topological class as $K$ can be obtained by \textit{stabilizations}.
A strand of $K$ in the front projection of $K$ is replaced by one of the zig-zags as in Figure \ref{stabilizations}. 
The stabilization is said to be positive if down cusps are introduced and negative if up cusps are introduced. 
The Legendrian isotopy type of $K$ changes through stabilization and so do the Thurston-Bennequin number and rotation number : $tb(S_{\pm}(K)) = tb(K) -1$ and $rot(S_{\pm}(K)) = rot(K) \pm 1$.


\begin{figure}[htpb!]
\begin{center}
\begin{picture}(300, 115)
\put(0,0){\includegraphics[width=4.3in]{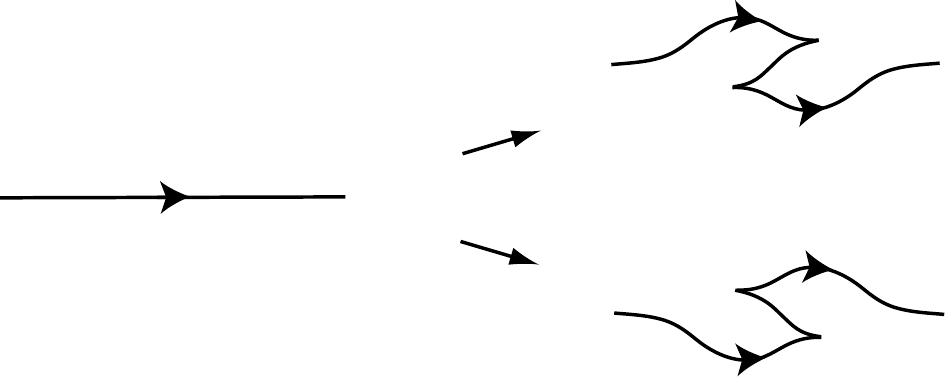}}
\put(53,67){$K$}
\put(135,85){\small $S_+(K)$}
\put(135,28){\small $S_-(K)$}
\end{picture}
\caption{Positive and negative stabilizations in the front
projection.}\label{stabilizations}
\end{center}
\end{figure}

It is known, \cite{El}, that if $K$ is a  null-homologous Legendrian knot in  $(\R^3, \xi_{std})$ and $\Sigma$ is a Seifert surface for $K$ then $$tb(K) + |rot (K)| \leq - \chi(\Sigma)$$

In particular,  if $K$ is the unknot then $tb(K) \le -1$ and if $T$ is the right hand trefoil knot then $tb(T)\le 1$. 
Both inequalities are sharp and are realized for the first and third knots in Figure \ref{unknot_trefoil}.\\


\section{Legendrian graphs}\label{graphs}

\begin{definition}A \textit{Legendrian graph} in a contact 3-manifold $(M, \xi)$ is a graph embedded in such a way that all its edges are Legendrian curves that are non-tangent to each other at the vertices.  When the graph $G$ must be specified this will also be referred to as a \emph{Legendrian embedding of $G$} or a \emph{Legendrian realization of $G$}.  
\end{definition}

It should be noted for Legendrian graphs, if all edges around a vertex are oriented outward, then no two tangent vectors at the vertex coincide in the contact plane.
However, two tangent vectors may have the same direction but different orientations resulting in a smooth arc through the vertex.
It is a result of this structure that the order of the edges around a vertices is not changed under Legendrian isotopy.  

\begin{proposition}
Given any embedded graph $G$ in $\mathbb{R}^3$ there exists a Legendrian realization of $G$ in $(\R^3, \xi_{std})$.
\label{leggraph}
\end{proposition}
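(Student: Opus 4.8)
The plan is to realize $G$ through its front projection, adapting the standard procedure for approximating a smooth knot by a Legendrian knot and adding the extra care needed at the vertices. First I would put $G$ in general position with respect to the projection $\pi$ to the $xz$-plane, so that $\pi(G)$ is a regular diagram: its only multiple points are transverse double points lying in the interiors of edges, the edges meeting at any vertex have distinct tangent directions, and no crossing coincides with a vertex. Such a position is achieved by an arbitrarily small ambient isotopy, so it does not change the spatial embedding type of $G$.

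Next I would replace each edge by a Legendrian arc. Recall that for a Legendrian curve the $y$-coordinate is recovered as the slope $dz/dx$ of the front, so the only local model forbidden in the front is a vertical tangency. Subdividing each edge into short sub-arcs and $C^0$-approximating, I would replace every near-vertical tangency of $\pi(G)$ by a cusp (a zig-zag), exactly as in the Legendrian realization of knots; this produces edges whose fronts have no vertical tangents. At each double point the two strands must satisfy the Legendrian crossing convention that the overstrand is the strand of smaller slope, since the positive $y$-axis points into the page. Whenever a crossing of $G$ violates this, I would correct it by a local modification---introducing a pair of cusps on the appropriate strand to lower its slope there---which alters neither the knot type of the edge nor the over/under data of $G$.

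The genuinely new point, absent for knots, is the non-tangency condition at the vertices. At a vertex $v$ with $y$-coordinate $y_v$, every Legendrian edge must leave $v$ with tangent lying in the contact plane $\xi_v$, which is spanned by $\partial_y$ and $\partial_x + y_v\,\partial_z$; thus the outward tangent directions fill an entire circle, parametrized by $(\cos\theta)\,(\partial_x+y_v\,\partial_z)+(\sin\theta)\,\partial_y$. Since each vertex has only finitely many incident edges, I would assign them pairwise distinct angles $\theta_i\in[0,2\pi)$ and arrange each edge-germ to leave $v$ in the corresponding direction. Here a $\theta_i$ with $\cos\theta_i\neq 0$ corresponds to an edge departing to the right or left with front-slope $y_v$ and a prescribed rate of change of slope, while $\cos\theta_i=0$ corresponds to an edge emanating from a cusp located at $v$; in every case the direction is realizable by a small adjustment of the edge near $v$. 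Distinctness of the $\theta_i$ forces the outward tangent vectors to be pairwise distinct directions, which is exactly the non-tangency required by the definition (two edges may be collinear and oppositely oriented, forming a smooth arc through $v$, but no two may point in the same direction).

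Finally I would verify that the Legendrian graph so constructed is ambient isotopic to $G$. Because the whole modification is a $C^0$-small perturbation that preserves every crossing together with its over/under label and preserves the cyclic order of edges at each vertex, the Legendrian realization and $G$ have the same diagram up to isotopy, hence are ambient isotopic. The step I expect to require the most care is the vertex step: one must confirm that distinct Legendrian outward directions can be prescribed simultaneously with the crossing-convention corrections without forcing any two edges to coincide in direction, and that all of this can be kept $C^0$-small so that the embedding type of $G$ is left untouched.
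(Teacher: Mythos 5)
Your proposal is correct and follows essentially the same strategy as the paper's proof: keep the vertices fixed, $C^0$-approximate the interior of each edge by a Legendrian arc through its front projection (inserting cusps, preserving crossings and embeddedness), and resolve the one genuinely new difficulty---non-tangency at the vertices---by assigning the finitely many edge-germs pairwise distinct tangent rays in the contact plane at each vertex. The only real difference is one of implementation at the vertices: the paper works in a Darboux chart for the symmetric structure $\xi_{sym}$, where the germs become distinct $\theta$-constant radial segments in the contact plane and non-tangency is immediate, whereas you encode the distinct contact-plane directions directly in front data (common slope $y_v$, distinguished by side, rate of change of slope, or a cusp), which is a correct and somewhat more self-contained account of the same step.
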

\begin{proof}
Denote the vertices of $G$ by $v_1, v_2, \dots, v_n$, and fix these points. 
Every point $v_i$ has an $\epsilon-$neighborhood $U_i$ contactomorphic to a neighborhood of the origin in $(\R^3, \xi_{sym})$.
Via this diffeomorphism, the contact plane $\xi_{v_i}$ is identified with the plane $z=0$ at the origin.
Near each vertex $v_i$, we modify $G$ through ambient isotopy, such that the edges incident with $v_i$ are segments which lie in the contact plane $\xi_{v_i}$ and are identified with $\theta$-constant segments in the plane $z=0$. 
The edges of $G$ are thus Legendrian near each vertex.

Consider $e$ an edge between two vertices $v_i$ and $v_j$ and let $e$ be identified with the $\theta_i-$ray near $v_i$ and with the $\theta_j-$ray near $v_j$.  
Denote by  $p_i\in  U_i$ and $p_j\in  U_j$ the two points which are identified with $(\frac{\epsilon}{2}, \theta_i, 0)$ and $(\frac{\epsilon}{2}, \theta_j, 0)$. 
Denote by $e_{p_i}$ the Legendrian segment between $v_i$ and $p_i$ identified with the segment $0\le r\le \frac{\epsilon}{2}, \theta=\theta_i, z=0$, and by $e_{p_j}$ the Legendrian segment between $v_j$ and $p_j$ identified with the segment $0\le r\le \frac{ \epsilon}{2}, \theta=\theta_j, z=0$. 
Denote by $e_{ij}$ the arc of $e$ between $p_i$ and $p_j$. 
We can $C^0-$approximate $e_{ij}$ by a Legendrian arc $\tilde{e_{ij}}$, in such a way that the union of arcs  $e_{p_i}\cup \tilde{e_{ij}}\cup e_{p_j}$ is a $C^1-$ curve. 
We do this by choosing a $C^0-$close approximation of the front projection of $e_{ij}$ by a regular curve $\overline{e_{ij}}$ with no vertical tangencies,  isolated cusps, and such that  at each point $p\in \overline{e_{ij}}$ the slope of   $\overline{e_{ij}}$ is close to the $y$-coordinate of the point on $e_{ij}$ projecting to $p$. 
We can do this while keeping the endpoints $p_i$ and $p_j$ fixed and in such a way that the Legendrian lift of $e_{ij}$, $\tilde{e_{ij}}$, coincides with $e$ near $p_i$ and $p_j$. 
For more details see \cite{Ge}, section 3.3.1. 
The curve  $e_{p_i}\cup \tilde{e_{ij}}\cup e_{p_j}$ is a Legendrian curve which is $C^0-$close to $e$. 

By making  $\overline{e_{ij}}$ have only  transverse intersections, no self-intersection in the approximating curve are introduced, thus $e_{p_i}\cup \tilde{e_{ij}}\cup e_{p_j}$ is an embedded arc.
Additionally, we can avoid introducing any non-trivial knotting. 

We approximate all other edges in the same way, and we obtain a Legendrian $C^0-$ approximation of $G$ (i.e. a Legendrian graph topologically ambient isotopic to $G$).
 
 \end{proof}

Legendrian graphs can also be studied via front projections and two generic front projections of a Legendrian graph are related by Reidemeister moves I, II and III together with three moves given by the mutual position of vertices and edges, see \cite{BI}. See Figure \ref{moves}.


\begin{figure}[htpb!]
\begin{center}
\begin{picture}(360, 188)
\put(0,0){\includegraphics[width=5.5in]{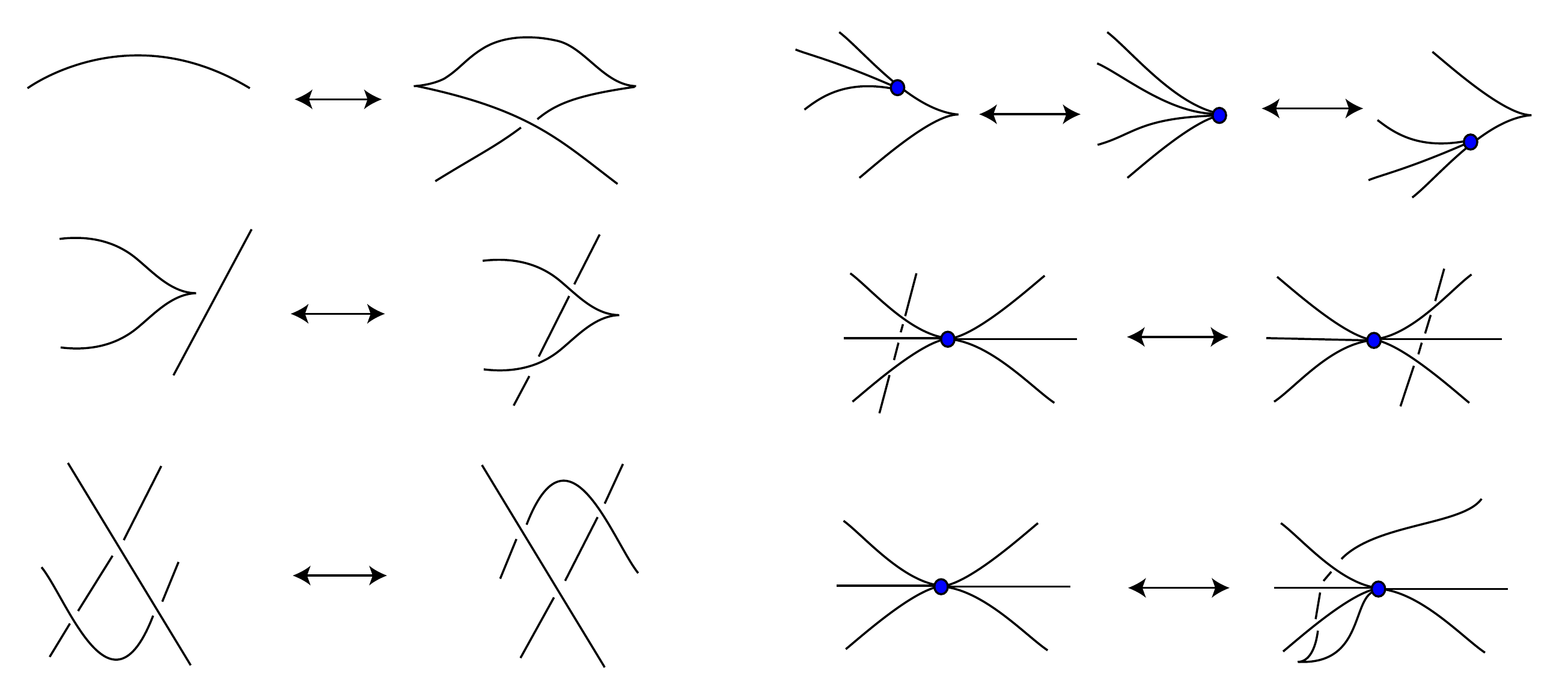}}
\put(84, 153){I}
\put(82,99){II}
\put(80,33){III}
\put(256, 150){IV}
\put(328, 151){IV}
\put(295,93){V}
\put(291,30){VI}
\end{picture}
\caption{\small Legendrian isotopy moves for graphs:  Reidemeister I, II and III moves, a vertex passing through a cusp (IV), an edge passing under or over a vertex (V), an edge adjacent to a vertex rotates to the other side of the vertex (VI). Reflections of these moves that are Legendrian front projections are also allowed.}\label{moves}
\end{center}
\end{figure}

\begin{remark} Through Legendrian isotopy, the order of the edges around a vertex does not change.
Since two edges which share a vertex can never have the same (oriented) tangent vector throughout a Legendrian isotopy, when seen in the contact plane at the vertex, the order of the edges around the vertex is the same up to a cyclic permutation.
\end{remark}

We extend the classical invariants  $tb$ and $rot$ to Legendrian graphs.  
A cycle in a Legendrian graph is a \emph{piecewise smooth Legendrian knot}, that is, a simple closed curve that is everywhere tangent to the contact planes, but has finitely many points, at the vertices, where it may not be smooth. 
We first define the invariants, $tb$ and $rot$, for piecewise smooth Legendrian knots and then we extend the definition to Legendrian graphs.

Given any piecewise smooth Legendrian knot $K$, there is a natural way to construct a smooth knot from $K$. We define the \textit{standard smoothing} of a cycle $K$  of a Legendrian graph to be the $C^1-$curve obtained by replacing $K$ in an $\epsilon-$neighborhood of each vertex by the lift  of a minimal front projection of a smooth curve, which coincides with $K$ outside of the $\epsilon-$neighborhood.  
The projection is minimal, in the sense that no extra stabilizations or knotting are introduced with this approximation. See Figure \ref{smoothing}. 
We denote the standard smoothing of a piecewise smooth Legendrian knot $K$ by $K_{st}$.  \\

\begin{proposition}\label{isosmoothing}
If $K_1$ and $K_2$ are piecewise smooth Legendrian knots which are isotopic as Legendrian graphs, then their standard smoothings are isotopic Legendrian knots.
\end{proposition}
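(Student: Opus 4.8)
The plan is to reduce to the elementary front-projection moves and to verify the claim one move at a time. By the result of \cite{BI} recalled above, any two front projections of Legendrian-isotopic graphs are related by a finite sequence of planar isotopies together with the moves I--VI of Figure \ref{moves}. Since $K_1$ and $K_2$ are cycles that are isotopic as Legendrian graphs, their front projections are related by such a sequence, and this sequence respects the combinatorial structure of the cycle: vertices go to vertices, and the two cycle-edges meeting at a valence-two vertex stay paired. It therefore suffices to show that if $K'$ is obtained from a piecewise smooth Legendrian knot $K$ by a single one of the moves I--VI, then the standard smoothings $K_{st}$ and $K'_{st}$ are Legendrian isotopic as smooth knots. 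As a preliminary I would first record that for a \emph{fixed} $K$ the standard smoothing is well defined up to Legendrian isotopy: two minimal Legendrian arcs sharing the same endpoints and the same tangent lines there, and introducing no extra cusps or crossings, are Legendrian isotopic rel endpoints inside the $\epsilon$-neighborhood, so the choices in the definition do not affect the outcome.

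For the moves I, II and III nothing happens at a vertex: each is supported in a disk of the projection disjoint from every vertex. Choosing the smoothing neighborhoods small enough to avoid the support of the move, I may use the \emph{same} local smoothing data near every vertex for both $K$ and $K'$. Then $K_{st}$ and $K'_{st}$ are genuinely smooth Legendrian knots that agree outside the move disk and differ inside it by exactly the corresponding Legendrian Reidemeister I, II or III move; since these front moves are Legendrian isotopies of smooth knots, $K_{st}$ and $K'_{st}$ are Legendrian isotopic.

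The moves IV, V and VI take place near a single vertex $v$, and here I would argue by a local Legendrian isotopy supported in a neighborhood of $v$, comparing the front pictures of the two minimal smoothings directly. For move V (an edge passing over or under $v$) the passing strand is a different portion of the knot and never enters the definition of the smoothing arc at $v$; after smoothing $v$ the move becomes a strand sliding across the smoothed region, i.e. a Legendrian Reidemeister II isotopy. For move IV (the vertex passing through a cusp), once $v$ is replaced by its minimal smoothing arc the move amounts to sliding a cusp along a smooth Legendrian strand from one side of the erased vertex to the other, which is again a Legendrian isotopy since no cusp is created or destroyed. The delicate case is move VI (an edge adjacent to $v$ rotating to the other side of $v$): here the relative slopes of the two cycle-edges at $v$ change, the minimal smoothing arc is genuinely different before and after, and its cusp pattern may change. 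For this case I would draw the front projections of the two minimal smoothings explicitly and exhibit a Legendrian isotopy between them, the content being to verify that the rotation produces only a cancelling configuration (a Legendrian Reidemeister I kink) and introduces no net stabilization.

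The step I expect to be the main obstacle is move VI, together with the cusp bookkeeping it requires: I must confirm that the two smoothings are Legendrian isotopic and not merely topologically isotopic or related by a stabilization, since a stray stabilization would change $tb$ or $rot$ and break the claim. The clean way to see this is to use that the standard smoothing is characterized by minimality, so that any discrepancy in cusps between the two sides would contradict minimality on one of them; the rotation of move VI, performed on the already-smoothed strand, is then realized by an honest Legendrian isotopy. Carrying this local analysis out for every admissible configuration of incoming slopes at $v$, and for each of the reflected versions of the moves allowed in Figure \ref{moves}, completes the proof.
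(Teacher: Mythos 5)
Your proposal follows essentially the same route as the paper's proof: reduce via \cite{BI} to the front moves I--VI, note that I--III are supported away from the vertices so smoothing commutes with them, and then check case by case that after smoothing the vertex moves become a planar isotopy, a Legendrian Reidemeister II move (for move V), or a Legendrian Reidemeister I move (for move VI) --- exactly the outcomes recorded in the paper's enumeration of valence-two subdiagrams in Figure \ref{moveIVI}. Your additional care about move VI (ruling out a stray stabilization by appealing to minimality of the standard smoothing) is the same bookkeeping the paper carries out implicitly through that case analysis, so the two arguments coincide in substance.
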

\begin{proof}
Since $K_1$ and $K_2$ are isotopic as Legendrian graphs, their front projects, $\tilde{K_1}$ and $\tilde{K_2}$, are related by a finite sequence of the standard Reidemeister moves I, II, III, as well as, the moves IV, V, VI, and Legendrian planar isotopy.  Thus to show that the standard smoothings of $K_1$ and $K_2$ are isotopic, we need only show that the standard smoothings of valence two subgraphs of the Reidemeister moves IV, V, and VI are moves that result in isotopic Legendrian knots.  

\underline{Reidemeister move IV:}  For Reidemeister move IV there are two different possibilities of valence two subgraphs up to reflection and planar isotopy to be considered, see Figure \ref{moveIVI}(a).  
After smoothing the move either shows no changed, or a minor planar isotopy.

\underline{Reidemeister move V:}  For Reidemeister move V there are two different possibilities of valence two subgraphs up to reflection and planar isotopy to be considered, see Figure \ref{moveIVI}(b).  
After smoothing the move either shows a minor planar isotopy, or a difference of a Reidemeister II move.

\underline{Reidemeister move VI:}  For Reidemeister move VI there are three different possibilities of valence two subgraphs up to reflection and planar isotopy to be considered, see Figure \ref{moveIVI}(c).  
After smoothing the move either shows no change, a minor planar isotopy, or a difference of a Reidemeister I move. 

In all cases the standard smoothing can be moved to the other standard smoothing by a single Reidemeister move or planar isotopy.  
\end{proof}


\begin{figure}[htpb!]
\begin{center}
\begin{picture}(360, 100)
\put(0,0){\includegraphics{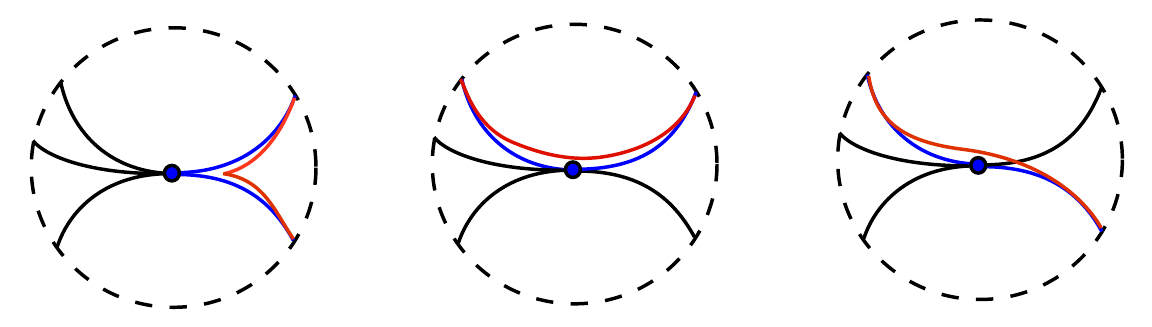}}
\end{picture}
\caption{\small Approximation by a $C^1-$curve near a vertex. The blue edges which are part of the cycle $K$ are replaced by the red $C^1-$arc near the vertex. }\label{smoothing}
\end{center}
\end{figure}


\begin{figure}[htpb!]
\begin{center}
\begin{picture}(410, 216)
\put(10,0){\includegraphics{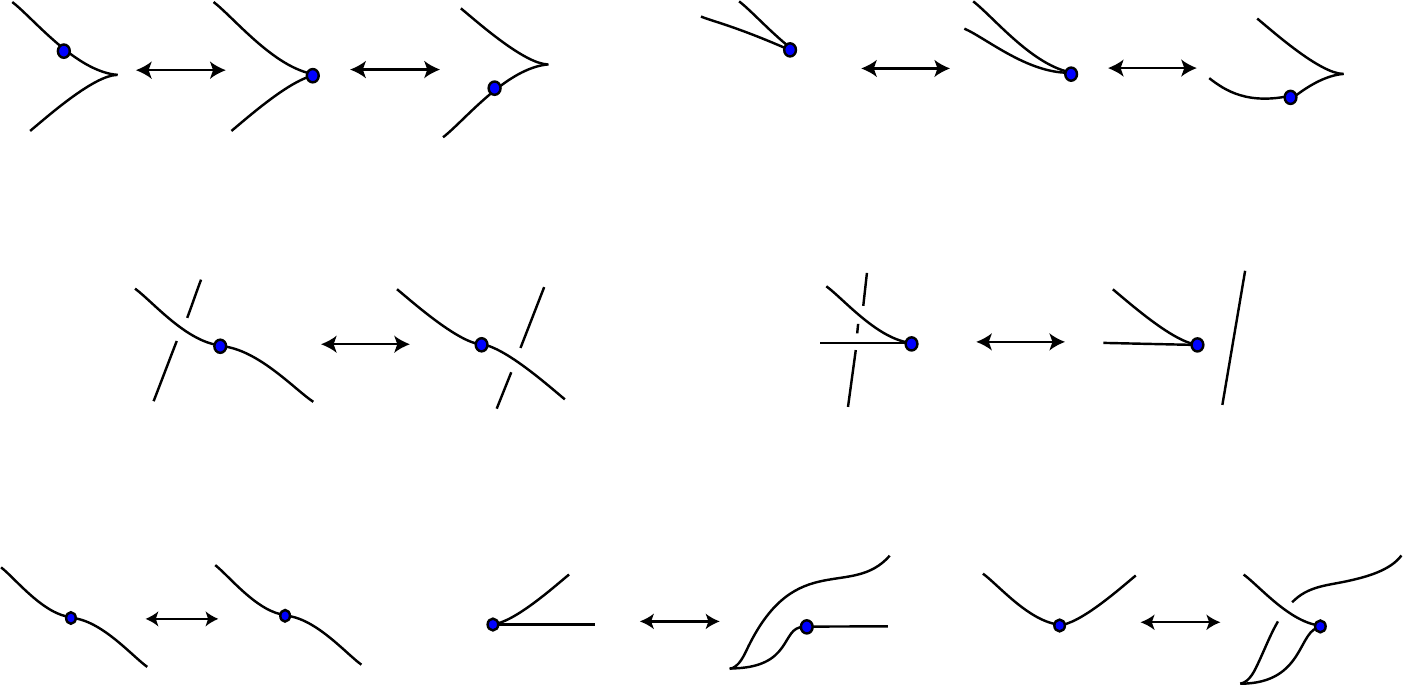}}
\put(-10,15){(c)}
\put(-10,100){(b)}
\put(-10,175){(a)}
\put(57,181){IV}
\put(119,181){IV}
\put(267,181){IV}
\put(337,181){IV}
\put(111, 101){V}
\put(300, 102){V}
\put(56,22){VI}
\put(198,22){VI}
\put(345,22){VI}
\end{picture}
\caption{\small Subdiagrams of Legendrian graphs moves.}\label{moveIVI}
\end{center}
\end{figure}

Of course, for any piecewise smooth Legendrian knot there are isotopic Legendrian knots, which can be obtained by a small isotopy near each vertex moving the edges so that they have parallel tangents at the vertex.  
In light of this, Proposition \ref{isosmoothing} shows that the isotopy class of a piecewise smooth Legendrian knot contains Legendrian embeddings from exactly one isotopy class of a Legendrian knot, the knot obtained by its standard smoothing.  
So given a piecewise smooth Legendrian knot $K$, and its standard smoothing $K_{st},$ one could take the definition of the classical invariants to be: $tb(K)=tb(K_{st})$ and $rot(K)=rot(K_{st})$. 
However, since we will be using the said definitions to define invariants for Legendrian graphs, we would like to defined them in such a way that smoothings are not needed. 
It should be noted that any other definition of $tb$ and $rot$ that coincides with the invariants for smooth Legendrian knots will be equivalent to the above definition, as a result of Proposition \ref{isosmoothing}.


\subsection{ The Thurston-Bennequin number.}
 Let $K$ represent a piecewise smooth Legendrian knot and $v$ a vector field along $K$ which is transverse to the contact planes. Take the push-off of $K$, $K'$ in the direction of $v$ and let $tb(K):=lk(K,K')$. This definition coincides with that for smooth knots. There is no obstruction given by the direction change at the vertices.

\begin{definition}
For a Legendrian graph $G$,  we fix an order on the cycles of $G$ and we define \textit{the Thurston-Bennequin number of $G$}, denoted by $tb(G)$, to be the ordered list of the Thurston-Bennequin numbers of the cycles of $G$.
If $G$ has no cycles, we define $tb(G)$ to be the empty list.
\end{definition}

\subsection{ The rotation number.}

 We define the rotation number of a null-homologous piecewise smooth Legendrian knot $K\subset (\R^3, \xi_{std})$ as follows:
 Consider $\Sigma$ with $\partial\Sigma= K$ and endow $K$ with the orientation induced by that on $\Sigma$.
 Consider the trivialization of  $\xi_{std}$ given by the two vectors $d_1=\frac{\partial}{\partial y}$ and $d_2=-y\frac{\partial }{\partial z}-\frac{\partial}{\partial x}  $.
 Denote by $p_1$, $p_2$,..., $p_s$, $p_{s+1}=p_1$ the vertices on $K$, in cyclic order as given by the orientation, denote by $e_i$, $i=1,...,s$, the smooth edge of $K$ between $p_i$ and $p_{i+1}$, and denote by $v_i$, $i=1,...,s$, the unit vector field tangent to $e_i$ pointing in the direction of the orientation on $K$.
We follow the $v_i's$ in the trivialization given by $d_1$ and $d_2$ and count the number of times $d_1$ is passed, with sign.
At each vertex $p_i$, $i=2,...,s+1$ if $v_{i-1}$ and $v_i$ do not coincide we complete the rotation counterclockwise if $\{v_{i-1}, v_i\}$ is a positively oriented basis for $\xi_{p_i}$ and clockwise if $\{v_{i-1}, v_i\}$ is a negatively oriented basis for $\xi_{p_i}$. This is equivalent to completing by a rotation from $v_{i-1}$ towards $v_i$ in the direction of the shortest angle between the two. Note that, since one edge is oriented towards the vertex and one edge is oriented away from the vertex, $v_{i-1}$ and $v_i$ cannot be opposite to each other.

 Denote by $p(K)$ the number of times $\pm d_1$ is passed in the counterclockwise direction and by $n(K)$ the number of times $\pm d_1$ is passed in the clockwise direction as $K$ is traced once. We define $rot(K)$ by
 $$rot(K):= \frac{1}{2}(p(K)-n(K)).$$
 
 Let $\mathcal{L}(K)$ denote the Legendrian isotopy class of $K$.
 The above discussion gives a recipe of how to compute the rotation number for a particular embedding of a piecewise smooth Legendrian knot.
However, when $K$ changes through Legendrian isotopy, the tangent vector at $K$ changes continuously and the edges cannot pass over one another at the vertices.  Thus we get a continuous map $rot: \mathcal{L}(K)\rightarrow \Z$. 
The rotation number is therefore a Legendrian isotopy invariant for piecewise smooth Legendrian knots.
If $K$ is a smooth Legendrian knot then we recover the rotation number for $K$.  

\begin{definition}
For a Legendrian graph $G$,  we fix an order on the cycles of $G$ with orientation and we define \textit{the rotation number of $G$}, denoted by $rot(G)$, to be the ordered list of the rotation numbers of the cycles of $G$.  
If $G$ has no cycles, we define $rot(G)$ to be the empty list.
\end{definition}


\section{Non-realization of maximal $tb$}\label{maxtb}

In \cite{M}, Mohnke proved that the Borromean rings and the Whithead link cannot be represented by Legendrian links of trivial unknots.  A \emph{trivial unknot} is one with $tb=-1$ and $rot=0$ (like the first unknot shown in Figure \ref{unknot_trefoil}). The trivial unknot attains the maximal possible Thurston-Bennequin number for an unknot.  The obstruction comes from upper bounds on $tb$ given by the minimal degree of one of the variables in the Kauffman polynomial.  

In this section we determine which graphs can be Legendrian realized in such a way that all cycles are trivial unknots. 
We will see that there are many graphs (even planar graphs) with no such Legendrian realization.
We give a full characterization of these graphs. 
We also present a more general result about which graphs can be realized with all their cycles having maximal $tb$ for a class of knots. 
The following two lemmas will be useful.


\begin{figure}[htpb!]
\begin{center}
\begin{picture}(144, 50)
\put(0,0){\includegraphics{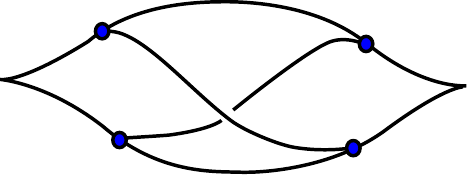}}
\put(15,42){$v_1$}
\put(20,6){$v_2$}
\put(107,3){$v_3$}
\put(111,38){$v_4$}
\end{picture}
\caption{ \small A Legendrian embedding of $K_4$.}\label{K4}
\end{center}
\end{figure}

\begin{lemma}
The front projections of  any two Legendrian realizations of a graph in $(\R^3, \xi_{std})$ are related by a finite sequence of Legendrian isotopies and changes involving
\begin{enumerate}
\item[(a)] one edge (i.e. stabilizations, crossings of the edge with itself)
\item [(b)] two adjacent edges (i.e. change in the number of crossings between the two edges)
\item [(c)] two nonadjacent edges (i.e change in the number of crossings between the two edges)
\end{enumerate}
\label{two_fronts}
\end{lemma}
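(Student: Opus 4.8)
```latex
\textbf{Proof proposal.}
The plan is to reduce the statement to a statement about ordinary link diagrams together with the extra data of the germs of the edges at the vertices. Any Legendrian realization of $G$ is determined, up to Legendrian isotopy, by its front projection, and two front projections of Legendrian realizations represent Legendrian-isotopic graphs exactly when they are related by the moves I--VI of Figure~\ref{moves} together with Legendrian planar isotopy. So the real content is to compare \emph{two different} Legendrian realizations, which need not be Legendrian isotopic, and to show that the passage from one to the other can be broken into the three listed types of local changes. The strategy I would follow is: first normalize both realizations near the vertices, then compare them along the edges.

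First I would fix the vertices. By Proposition~\ref{leggraph} and the construction used there, each vertex has a neighborhood contactomorphic to $(\R^3,\xi_{sym})$ in which the incident edge-germs are $\theta$-constant rays in the plane $z=0$; by the Remark, the cyclic order of these germs is a Legendrian isotopy invariant of the vertex. For two given realizations $f(G)$ and $\bar f(G)$ I would first move the vertices to common locations by an ambient Legendrian isotopy, and then, vertex by vertex, arrange the edge-germs to agree. The order of edges around each vertex is cyclic data that is preserved under isotopy, so after a planar isotopy (move VI handles the rotation of an edge-germ past its neighbors) I can assume the two realizations have literally the same germ at every vertex. This is where moves IV, V, VI enter: they are precisely the local vertex moves, so any discrepancy concentrated near a vertex is absorbed here, and none of these germ adjustments changes the combinatorial structure away from the vertices.

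Having matched the realizations near every vertex, each edge $e$ now becomes a Legendrian arc with fixed Legendrian endpoints (including fixed $1$-jet at the endpoints) in the two realizations. The key classical input is the relative version of the fact used in Eliashberg--Fraser: two Legendrian arcs with the same endpoints and endpoint tangencies, representing the same smooth arc rel boundary, differ by stabilizations and Legendrian isotopy rel boundary. Applying this edge by edge produces the type (a) moves (stabilizations and self-crossings of a single edge). What remains is to account for the difference in how the edges cross one another: after the per-edge normalization, the two diagrams may still differ in the number of crossings between pairs of edges, and these pairwise crossing changes are exactly types (b) and (c), split according to whether the two edges share a vertex (adjacent) or not (nonadjacent). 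I would realize a change in the crossing number between a fixed pair of edges by an ambient isotopy pushing one edge across the other (a finger move), recorded in the front projection by the corresponding local modification.

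\textbf{Main obstacle.} The delicate step is the per-edge normalization: I must arrange the endpoint adjustments so that fixing one edge's endpoints does not undo the normalization already achieved at a shared vertex for an adjacent edge, and so that the relative Legendrian classification can be applied with the $1$-jet data held fixed. The cleanest way to handle this is to do the vertex normalization first and globally (so all germs are pinned simultaneously), and only afterward treat the edges, each rel its now-fixed endpoints; the adjacency distinction in (b) versus (c) then arises naturally because two edges meeting at a vertex already have their crossing behavior near that vertex fixed by the germ-matching step, leaving only the genuinely interior crossing changes to be enumerated. I expect no new contact-geometric difficulty beyond the relative arc classification, which is standard; the care required is bookkeeping, ensuring every local modification is one of the three listed types and that moves IV--VI suffice to reconcile all vertex-local differences.
```
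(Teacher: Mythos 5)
Your outline is a much more ambitious argument than the paper's own proof, which is essentially a two-sentence bookkeeping observation: if the two realizations are Legendrian isotopic the fronts differ by isotopy alone; if not, the differences consist of stabilizations and crossings, and any crossing lies on one edge, on two adjacent edges, or on two nonadjacent edges. Your attempt to make this rigorous, however, contains a step that fails as stated, and it fails exactly at the point where the lemma has real content. You claim that after moving the vertices to common positions you can ``arrange the edge-germs to agree'' by planar isotopy and move VI, citing the invariance of the cyclic order of edges at a vertex. This gets the logic backwards: the lemma compares two \emph{arbitrary} Legendrian realizations of $G$, which need not be Legendrian isotopic and in particular need not induce the same cyclic order of edge-germs at corresponding vertices. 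Since that cyclic order \emph{is} a Legendrian isotopy invariant (the remark following Figure \ref{moves}; moves IV--VI do not alter it), no isotopy can match the germs of two realizations whose cyclic orders disagree at some vertex. This situation genuinely occurs --- it is precisely the phenomenon exploited in Remarks \ref{cute} and \ref{cutv} and Figure \ref{tbrot} --- and it already occurs for $K_4$: its valence-three vertices admit two distinct cyclic orders (e.g.\ a planar embedding and its mirror image), and $K_4$ is exactly the case to which the lemma is applied in Lemma \ref{mod2}. So your normalization step breaks down in the very application the lemma was designed for.

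The repair is available inside the list of allowed changes, but you must invoke it explicitly: permuting two germs at a shared vertex is accomplished by sweeping one edge across the other in a small neighborhood of the vertex, which in the front projection is a crossing change between two \emph{adjacent} edges --- a type (b) change, not an isotopy. Once the vertex normalization is restated as ``isotopy together with type (b) changes near vertices,'' the rest of your scheme can proceed. A secondary inaccuracy: the relative classification you cite (Legendrian arcs with fixed endpoints and $1$-jets, smoothly isotopic rel boundary, differ by stabilizations and Legendrian isotopy rel boundary) cannot by itself ``produce the self-crossings'' of type (a); it applies only after the two arcs lie in the same smooth isotopy class rel boundary. You must first perform self-crossing changes (type (a)) to match the smooth classes of each edge, and only then apply the relative result to reduce what remains to stabilizations; as written, these two steps are conflated.
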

\begin{proof}
If the two Legendrian realizations are Legendrian isotopic then the two front projections differ by Legendrian isotopy only. If not, the two front projections may differ in number of edge stabilizations or number of crossings. The stabilizations occur on a single edge while the crossings may occur on a single edge, between two adjacent edges or between two nonadjacent edges.  
\end{proof}

\begin{lemma} For any Legendrian embedding $L$ of $K_4$ in $(\mathbb{R}^3, \xi_{std})$
$$\displaystyle\sum_{\gamma \in \Gamma_L}tb(\gamma) \equiv 0  \bmod 2, $$
where $\Gamma_L$ is the set of cycles in $L$.  
\label{mod2}
\end{lemma}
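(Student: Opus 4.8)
The plan is to compute each $tb(\gamma)$ from the front projection of its standard smoothing $\gamma_{st}$ via the formula $tb(\gamma)=\mathrm{writhe}(\widetilde{\gamma_{st}})-\tfrac12\#\mathrm{cusps}(\widetilde{\gamma_{st}})$, and to show that, summed over all seven cycles of $K_4$, the total writhe and the total half-cusp-count are each even. The first ingredient is purely combinatorial: $K_4$ has exactly seven cycles, four triangles and three $4$-cycles, so $\Gamma_L$ has seven elements; by the symmetry of $K_4$ one checks directly that each of the six edges lies in exactly four of these cycles and that each pair of edges (adjacent or not) lies in exactly two of them. I will use only that these incidence numbers, $4$ and $2$, are even, together with the fact that $K_4$ is $3$-valent.

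For the writhe, fix an arbitrary orientation on each edge and write $w(e)$ for the signed count of self-crossings of $e$ in the front projection and $w(e,f)$ for the signed count of crossings between distinct edges $e$ and $f$. Letting $\varepsilon_\gamma(e)=\pm1$ record whether $\gamma$ traverses $e$ with or against its fixed orientation, smoothing the corners creates no new crossings, so
\[
\mathrm{writhe}(\widetilde{\gamma_{st}}) = \sum_{e\in\gamma} w(e) + \sum_{\{e,f\}\subseteq\gamma} \varepsilon_\gamma(e)\varepsilon_\gamma(f)\, w(e,f).
\]
Summing over $\gamma\in\Gamma_L$, the self-crossing contribution is $4\sum_e w(e)$, which is even; and for each fixed pair $\{e,f\}$ the cross term is summed over the two cycles containing the pair, giving a sum of two terms $\pm w(e,f)$, again even. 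Hence $\sum_\gamma \mathrm{writhe}(\widetilde{\gamma_{st}})$ is even.

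For the cusps, the cusps of $\widetilde{\gamma_{st}}$ split into those lying in the interiors of the edges of $\gamma$ and those created by the standard smoothing at the vertices of $\gamma$. The interior cusps on an edge $e$, say $c(e)$ of them, are common to all cycles through $e$ and contribute $4\sum_e c(e)$ to the total, hence the even number $2\sum_e c(e)$ after halving. The vertex cusps are the crux: near a vertex every incident edge is tangent, in the front projection, to the single line whose slope is the $y$-coordinate of the vertex, so each edge leaves the vertex either to the left or to the right, and smoothing the two edges of $\gamma$ at a vertex introduces a cusp precisely when both leave on the same side. Since $K_4$ is $3$-valent, the number $s(v)$ of same-side pairs among the three edges at $v$ is either $3$ or $1$, hence always odd; and each pair of edges at $v$ is used by exactly two cycles, so the vertices of $\gamma$ contribute $2\sum_v s(v)$ cusps in total, i.e. $\sum_v s(v)$ after halving. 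As $\sum_v s(v)$ is a sum of four odd numbers it is even, so the total half-cusp-count is even. Subtracting the two even totals gives $\sum_{\gamma\in\Gamma_L} tb(\gamma)\equiv 0 \pmod 2$.

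The main obstacle is the vertex-cusp bookkeeping. One must justify that at each vertex every incident edge has a well-defined left/right side in the front projection — this follows because all Legendrian tangent directions at the vertex project to the contact-plane slope — and that the minimal (standard) smoothing introduces a cusp exactly for same-side pairs. This is the only place the $3$-valence of $K_4$ is used, and it is precisely what forces each $s(v)$ to be odd and hence, summed over the four vertices, the cusp total to be even; the writhe parity, by contrast, falls out automatically from the even edge- and pair-incidence numbers.
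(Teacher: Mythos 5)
Your proof is correct, but it takes a genuinely different route from the paper's. The paper proves the lemma by a ``base case plus invariance'' argument: it exhibits one explicit Legendrian embedding of $K_4$ whose seven cycles have $tb$ summing to $-8$, and then invokes its Lemma~\ref{two_fronts} (any two Legendrian realizations have fronts related by Legendrian isotopy together with changes supported on one edge or on a pair of edges) to conclude that each such change alters the sum by an even amount, since each edge lies in four cycles and each pair of edges in two. You instead compute the parity directly from an arbitrary front, splitting the total writhe and total cusp count into contributions indexed by edges, pairs of edges, and vertices; the incidence numbers $4$ and $2$ dispose of the writhe and of the interior cusps exactly as in the paper, but the new ingredient is your vertex-cusp analysis, where $3$-valence forces the number $s(v)$ of same-side pairs at each vertex to be odd, hence an even total over the four vertices. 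What your approach buys: it is self-contained at the level of a single embedding (no need for the moves lemma or a reference embedding), and it isolates precisely where the structure of $K_4$ enters ($3$-valence at vertices, and the even cycle-incidence numbers). What the paper's approach buys: it never has to analyze cusps created by smoothing at vertices at all, which is the delicate part of your argument. Two small points you should tighten: (i) you need the front to be generic, i.e.\ no edge has tangent $\pm\partial/\partial y$ at a vertex, so that ``leaves to the left/right'' is well defined (a small Legendrian isotopy arranges this, and $tb$ is an invariant); (ii) the claim that a same-side pair produces \emph{exactly} one cusp is more than you need and slightly fragile -- what is forced, and what suffices for parity, is that the smoothing reverses the $x$-direction of travel and hence contains an \emph{odd} number of cusps near the vertex, while an opposite-side pair yields an even number.
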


\begin{proof}
Denote by $v_1$, $v_2$, $v_3$ and $v_4$ the vertices of an abstract $K_4$.  There are seven cycles in $K_4$: four 3-cycles  ($v_1v_2v_3, v_1v_2v_4, v_1v_3v_4, v_2v_3v_4)$ and three 4-cycles $(v_1v_2v_3v_4, v_1v_2v_4v_3,$\\ $ v_1v_3v_2v_4).$ Each edge appears in four different cycles and each pair of edges appears in two different cycles.
Consider the embedding $K$ of $K_4$ shown in Figure \ref{K4}. For this embedding, there are six cycles with $tb=-1$ and one cycle ($v_1v_2v_4v_3$) with $tb=-2$, thus
$$\displaystyle S=\sum_{\gamma \in \Gamma_K} tb(\gamma)=-8. $$
Take an arbitrary Legendrian embedding of $K_4$, call it $L$. By Lemma \ref{two_fronts},  the front projection for this embedding differs from  $K$ by a finite sequence of Legendrian isotopy and changes of the form described above in one edge, two adjacent edges, or two non-adjacent edges.
First, since $tb$ is an invariant, $S$ does not change under isotopy. 
\begin{enumerate}
\item If a change in a single edge is made, the $tb$ for all four cycles containing this edge is modified by the same quantity.
\item If a change in two adjacent edges is made, the $tb$ for both cycles containing this pair of edges is modified by the same quantity.
\item If a change in two non-adjacent edges is made, the $tb$ for both cycles containing this pair of edges is modified by the same quantity.
\end{enumerate}
Thus, the parity of the sum of the $tb$'s over all cycles remains unchanged throughout the process and $$ \displaystyle\sum_{\gamma \in \Gamma_L} tb(\gamma) \equiv 0  \bmod 2. $$

\end{proof}
We define $L_{odd} $ to be the set of topological knot classes with odd maximal Thurston-Bennequin number. 
 We have the following theorem:

\begin{theorem}
Let $G$  be a graph that contains $K_4$ as a minor. Then there does not exist a Legendrian realization of $G$ such that all of its cycles are knots in $L_{odd} $ realizing their maximal Thurston-Bennequin number. \label{thmG}

\end{theorem}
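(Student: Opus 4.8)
The plan is to reduce the general hypothesis to a genuine Legendrian $K_4$ and then derive a contradiction from Lemma \ref{mod2} by a parity count. Suppose, for contradiction, that $G$ admits a Legendrian realization in which every cycle is a knot in $L_{odd}$ realizing its maximal Thurston-Bennequin number; since each such class has \emph{odd} maximal $tb$, every cycle of $G$ then has odd Thurston-Bennequin number in this realization.

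First I would replace the $K_4$ minor by a $K_4$ subdivision. Because $K_4$ is cubic (maximum degree three), it is a classical fact of graph minor theory that $G$ contains $K_4$ as a minor if and only if $G$ contains a subgraph $\Gamma$ that is a subdivision of $K_4$: the four connected branch sets of the minor can each be pruned to the minimal subtree joining their three attaching edges, and this subtree is either a path or a $Y$, so the branch vertex of $K_4$ survives while the rest subdivides into valence-two vertices. Restricting the given Legendrian realization of $G$ to $\Gamma$ yields a Legendrian subgraph whose only vertices of valence greater than two are the four branch vertices $w_1,w_2,w_3,w_4$, and whose cycles are exactly the images of the seven cycles of $K_4$ (four triangles and three quadrilaterals).

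Next I would delete the valence-two vertices by smoothing. Each valence-two vertex lies on precisely those cycles using both of its incident edges, which there occur consecutively, so the standard smoothing of Section \ref{graphs} replaces a neighborhood of it by a single $C^1$ arc consistently along every cycle through it. Smoothing all valence-two vertices of $\Gamma$ therefore produces a genuine Legendrian embedding $L$ of $K_4$. Since $tb(K)=tb(K_{st})$ for any piecewise smooth Legendrian knot, this operation preserves the Thurston-Bennequin number of every cycle; hence the seven cycles of $L$ are the standard smoothings of seven cycles of $\Gamma\subset G$ and each still has odd $tb$.

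Finally I would apply Lemma \ref{mod2} to $L$, which gives
$$\sum_{\gamma\in\Gamma_L}tb(\gamma)\equiv 0 \bmod 2.$$
But this is a sum of seven odd integers, hence odd, a contradiction. Therefore no Legendrian realization of $G$ with all cycles in $L_{odd}$ at maximal $tb$ can exist. The parity count is immediate once the setup is in place, so I expect the only genuine work to lie in the two reductions: justifying that the $K_4$ minor may be taken as a topological $K_4$ (using $\Delta(K_4)=3$), and checking that smoothing the resulting valence-two vertices yields a bona fide Legendrian $K_4$ whose cycle invariants are unchanged, so that Lemma \ref{mod2} applies verbatim.
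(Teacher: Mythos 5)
Your proof is correct and takes essentially the same approach as the paper: reduce to a subdivision of $K_4$ (using that $K_4$ has maximum degree three), then derive a parity contradiction from Lemma \ref{mod2}, since seven odd Thurston-Bennequin numbers cannot sum to an even number. Your intermediate step of smoothing the valence-two vertices to produce a genuine Legendrian $K_4$ with unchanged cycle invariants carefully fills in a detail that the paper leaves implicit when it asserts ``it suffices to prove the theorem for $G=K_4$.''
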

\begin{proof}
It suffices to prove the theorem for $G=K_4$, since any graph that contains $K_4$ as a minor contains a subdivision of $K_4$ (see Definition \ref{sub}).
 Assume all seven cycles of $K_4$  can be realized with maximal odd Thurston-Bennequin number, $2t_n+1, n=1, \dots,7$.  Then,
$ \displaystyle\sum_{n=1}^{7} (2t_n+1) \not \equiv 0 \bmod 2$, contradicting the conclusion of Lemma \ref{mod2}.
 \end{proof}
 
Since the unknot has odd maximal $tb=-1$ we obtain the following corollary.
Recall that the trivial unknot is the unknot with maximal Thurston-Bennequin number.
 
\begin{corollary}
Let $G$  be a graph that contains $K_4$ as a minor. Then there does not exist a Legendrian realization of $G$ such that all of its cycles are trivial unknots. \label{unknot}
\end{corollary}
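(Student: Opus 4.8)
The plan is to derive this statement directly from Theorem \ref{thmG}, since a trivial unknot is precisely a knot in $L_{odd}$ realizing its maximal Thurston-Bennequin number. The only work is to verify that the hypotheses of Theorem \ref{thmG} are satisfied by the configuration in which every cycle is a trivial unknot.

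First I would recall that a trivial unknot is, by definition, an unknot with $tb = -1$ and $rot = 0$. The Eliashberg--Bennequin inequality stated earlier in the excerpt gives $tb(K) \le -1$ for any Legendrian unknot $K$, with equality realized (see the first unknot in Figure \ref{unknot_trefoil}). Hence the value $tb = -1$ is the maximal Thurston-Bennequin number for the unknot, and the trivial unknot is exactly the Legendrian unknot attaining it. Next I would observe that $-1$ is odd, so the topological class of the unknot belongs to $L_{odd}$, and the trivial unknot realizes this maximal odd value.

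With these two observations in hand, suppose toward a contradiction that $G$ contains $K_4$ as a minor and admits a Legendrian realization in which every cycle is a trivial unknot. Then every cycle of this realization is a knot whose topological class lies in $L_{odd}$ and which realizes its maximal Thurston-Bennequin number $-1$. This is exactly the situation forbidden by Theorem \ref{thmG}, so no such realization exists, establishing the corollary.

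There is essentially no obstacle here beyond confirming that the unknot qualifies as a member of $L_{odd}$ attaining maximal $tb$; once that bookkeeping is done, the result is an immediate specialization of Theorem \ref{thmG}. If one wished to make the argument self-contained rather than appealing to the general theorem, the same conclusion would follow by repeating the parity computation of Lemma \ref{mod2} with the seven cycles of $K_4$ each contributing $tb = -1$, giving a total of $-7 \not\equiv 0 \bmod 2$, which contradicts the lemma; but invoking Theorem \ref{thmG} is the cleaner route.
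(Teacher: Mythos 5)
Your proof is correct and matches the paper's own argument: the paper derives Corollary \ref{unknot} directly from Theorem \ref{thmG} by noting that the unknot's maximal Thurston--Bennequin number is $-1$, which is odd, so the trivial unknot is precisely a knot in $L_{odd}$ realizing its maximal $tb$. Your supplementary remark about rerunning the parity argument of Lemma \ref{mod2} directly is also valid, but the main route you chose is exactly the one the paper takes.
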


When we consider a graph $G$ that contains $K_4$ as a minor and an arbitrary Legendrian embedding $f(G),$ Corollary~\ref{unknot} guaranties, either $f(G)$ contains a nontrivial knot, or that all of the cycles of $f(G)$ are unknots, but they do not all have maximal $tb$.  
For any intrinsically knotted graph (see Section \ref{spatialgrph}) Corollary \ref{unknot} is not surprising, since 
every embedding of an intrinsically knotted graph contains a nontrivial knot.  While there are many graphs 
that contain $K_4$ as a minor and are not intrinsically knotted, all of the intrinsically knotted graphs contain $K_4$ as a minor.  
This can be seen, by noting that $K_4$ is a minor of all graphs in the Petersen family.  Then since all intrinsically knotted 
graphs are also intrinsically linked, we see $K_4$ is a minor of all of the intrinsically knotted graphs.

The same should be considered for Theorem~\ref{thmG}.  Given an arbitrary Legendrian 
embedding $f(G),$ of a graph $G$ that contains $K_4$ as minor, Theorem~\ref{thmG} implies 
that either $f(G)$ does not only contain knots from $L_{odd},$ or $f(G)$ contains only knots 
from $L_{odd},$ and they do not all attain their maximal $tb$.  So Theorem~\ref{thmG} gives 
more information about intrinsically knotted graphs.  One example of an intrinsically knotted graph is $K_7$, (See  \cite{CG}). 
There are embeddings of $K_7$ with unknots as all but one cycle which is a trefoil.  Both the unknot 
and the right handed trefoil have odd maximal Thurston-Bennequin number, so then 
Theorem~\ref{thmG} implies for such a Legendrian embedding not all of the cycles will attain their maximal $tb$.  
There are many other knots whose maximal Thurston-Bennequin number is odd. 
A few examples are the right handed trefoil (Figure \ref{unknot_trefoil}), the figure eight knot, the $6_1$ knot 
and its mirror image, the $6_2$ knot and its mirror image.

\begin{remark}There is a more general version of Theorem~\ref{thmG} that is an immediate consequence of the proof, 
though it is cumbersome to state.  Let $G$  be a graph that contains $K_4$ as a minor. Let $S$ be the subdivision of 
$K_4$ that $G$ contains.  Then there does not exist a Legendrian realization of $G$ such that $S$ contains precisely 
an odd number of cycles that are knots in $L_{odd},$ where all the knots in $S$ realize their maximal Thurston-Bennequin number.  
\end{remark}

If we focus on embeddings with only unknots as in Corollary \ref{unknot} the converse also holds.
Before proving this we introduce some needed definitions and observations. 

\begin{definition}
\begin{enumerate}
\item[]
\item A \textit{path} between two vertices $v_1$ and $v_2$ of a graph $G$ is a finite sequence of at least two edges starting at $v_1$ and ending at $v_2$, with no repetition of vertices.
\item A vertex of the graph $G$ is said to be \textit{a cut vertex} if by deleting the vertex (and all incident edges) the resulting graph has more connected components than $G$. 
\item An edge of $G$ is said to be a \textit{cut edge} if by deleting the edge the resulting graph has more connected components than $G$. 
\item A \textit{subdivision} of the graph $G$ is a graph obtained by replacing a finite number of edges of $G$ with paths (one can  think of this as adding a finite number of vertices along edges of $G$).
\end{enumerate}
\label{sub}
\end{definition}

\begin{remark}Let $G$ be a graph which does not contain $K_4$ as a minor.  Let $v_1$, $v_2$, $v_3$, $v_4$ be four vertices in a cycle of $G$, appearing in this order. For any such formation,  there are not two edges or paths, other than the ones already contained in the cycle, connecting $v_1$ and $v_3$ and connecting $v_2$ and $v_4$. Otherwise $v_1$,  $v_2$, $v_3$, and $v_4$ represent the vertices of a (subdivision of) $K_4$. See Figure \ref{noK4_remarks}(a). \label{alternates}
\end{remark}

\begin{remark}Let $G$ be a graph which does not contain $K_4$ as a minor.  Let $v_1$, $v_2$, $v_3$, be three vertices in a cycle of $G$. For any such formation,  there is not an additional vertex $v$ with distinct edges or paths, connecting $v$ to the other vertices $v_1$, $v_2$, and $v_3$. Otherwise $v_1$,  $v_2$, $v_3$, and $v$ represent the vertices of a (subdivision of) $K_4$. See Figure \ref{noK4_remarks}(b). \label{triangle}
\end{remark}

\begin{figure}[htpb!]
\begin{center}
\begin{picture}(200, 70)
\put(0,0){\includegraphics{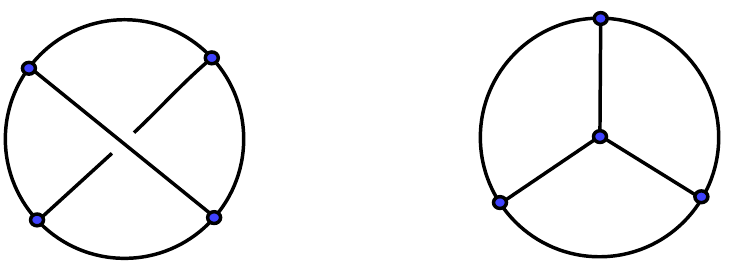}}
\put(28,-12){(a)}
\put(169,-12){(b)}
\put(63,63){$\small {v_1}$}
\put(0,63){$\small {v_2}$}
\put(-2,9){$\small {v_3}$}
\put(65,9){$\small {v_4}$}
\put(178,38){$\small {v}$}
\put(172,77){$\small {v_1}$}
\put(129,16){$\small {v_2}$}
\put(208,16){$\small {v_3}$}
\end{picture}
\caption{\small Embeddings of the graph $K_4$ as described in Remarks \ref{alternates} and \ref{triangle}.}\label{noK4_remarks}
\end{center}
\end{figure}

\begin{theorem}
Let $G$  be a graph, if $G$ does not contain $K_4$ as a minor then it can be Legendrian realized in  $(\R^3, \xi_{std})$ in such a way that all its cycles are trivial unknots. \label{thmGrec}

\end{theorem}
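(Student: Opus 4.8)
The plan is to exploit the standard structural characterization of $K_4$-minor-free graphs and to build the Legendrian realization by induction on that structure, keeping the front projection in a rigid left-to-right normal form from which the triviality of every cycle can be read off immediately. The governing geometric fact is that a crossing-free front which is a simple closed curve with exactly two cusps represents a trivial unknot: it is topologically unknotted, it has $tb=-1$, and the Eliashberg inequality then forces $rot=0$.

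First I would reduce to the $2$-connected case. Passing to the block--cut-tree of $G$, every cycle of $G$ lies in a single block, and a vertex or edge shared between two blocks lies on no cycle that spans both. Hence it suffices to Legendrian realize each block with all of its cycles trivial unknots and then glue the realizations inside pairwise disjoint boxes, joining them by Legendrian arcs along the cut vertices and cut edges; these arcs belong to no cycle, so by Proposition~\ref{leggraph} they may be routed without disturbing any cycle. A single edge or vertex is trivial, so the content is in a $2$-connected block $B$, and since $B$ has no $K_4$-minor it is series--parallel: by the standard structure theorem $B$ is a two-terminal series--parallel graph, obtained from a single edge by finitely many series and parallel compositions (loops and multi-edges are accommodated, a loop being drawn directly as a small two-cusp oval).

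The core is an induction over the series--parallel decomposition tree, maintaining the following normal form for a two-terminal graph $(H;s,t)$: $H$ lies in the strip $0\le x\le 1$ with $s$ the unique point on $x=0$ and $t$ the unique point on $x=1$; the front is crossing-free; every edge is strictly $x$-monotone; every $s$--$t$ path is $x$-monotone; and every cycle is $x$-unimodal, so that its front has exactly two cusps. The base case of a single monotone segment is immediate. For a series composition $H=H_1\cdot H_2$ I place $H_1,H_2$ in abutting strips $[0,\tfrac12]$ and $[\tfrac12,1]$; the shared terminal is a cut vertex, so every cycle stays inside one factor and remains unimodal, while $s$--$t$ paths concatenate monotonically. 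For a parallel composition $H=H_1\,\|\,H_2$ I stack $H_1$ in an upper band and $H_2$ in a lower band over the common strip, letting the drawings meet only at $s$ and $t$; near $s$ all edges emanate rightward and near $t$ all arrive from the left, producing the two cusps. Cycles lying wholly in one factor are unchanged, while a new cycle is an $s$--$t$ path $P_1$ (upper) followed by an $s$--$t$ path $P_2$ (lower) traversed backwards; since each $P_i$ is $x$-monotone and the bands are disjoint away from $s,t$, the cycle is embedded with $x$ rising along $P_1$ and falling along $P_2$, hence unimodal with exactly two cusps.

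Finally I would read off the invariants on the assembled realization. Each cycle has a crossing-free front that is a simple closed curve with exactly two cusps, so it is unknotted and $tb=\mathrm{writhe}-\tfrac12\#\mathrm{cusps}=0-1=-1$; bounding a disk $D$ and applying $tb(K)+|rot(K)|\le-\chi(D)=-1$ from \cite{El} forces $rot=0$, so every cycle is a trivial unknot. I expect the main obstacle to be the parallel-composition step: one must check carefully that stacking introduces no new crossings, that every pre-existing cycle keeps exactly two cusps once its terminal vertices acquire the extra incident edges of the other factor, and that the edges remain mutually non-tangent at $s$ and $t$ so that the result is a genuine Legendrian graph. Remarks~\ref{alternates} and~\ref{triangle} are what make this rigid drawing available: they rule out the crossing-chord and apex-of-triangle configurations that would otherwise force a cycle to leave the monotone normal form.
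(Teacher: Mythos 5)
Your proposal is correct, but it follows a genuinely different route from the paper's. You outsource the combinatorics to the classical Dirac--Duffin characterization (a $2$-connected graph with no $K_4$ minor is a two-terminal series--parallel graph) and then run an induction over the series--parallel decomposition, maintaining a monotone normal form (strictly $x$-monotone edges, $x$-monotone terminal paths, $x$-unimodal cycles) from which the conclusion is automatic: every cycle has a crossing-free front with exactly two cusps, hence is an unknot with $tb=0-\tfrac12\cdot 2=-1$, and Eliashberg's inequality forces $rot=0$. The paper never invokes the series--parallel structure theorem; after the same reduction to components with no cut vertices or cut edges, it gives a direct drawing --- a chosen cycle $C$ realized as a horizontal line, all chords nested above it, all attached paths nested below, recursively --- and then proves two claims by contradiction: that every vertex and edge of $G$ actually gets realized, and that no cycle exhibits a stabilization, in each case exhibiting a subdivision of $K_4$ via Remarks \ref{alternates} and \ref{triangle}. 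Those two remarks are precisely the combinatorial content that your citation of the structure theorem packages away, so the approaches are cousins, but the organization differs: yours buys modularity and a verification that falls out of the inductive invariant (no contradiction arguments needed), while the paper's is self-contained graph theory at the cost of two ad hoc arguments. Two points to tighten when writing yours up: blocks sharing a cut vertex must literally share the embedded point (shrink each block's front into a small box attached at that point), rather than being ``joined by Legendrian arcs,'' which is only appropriate for cut edges; and at the identified terminals of a parallel composition you should perturb slopes so the edges of the two factors remain pairwise non-tangent at $s$ and $t$ --- both are routine, and the second you already flagged.
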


\begin{proof}
We need only prove the theorem for $G$ connected, with no cut edges and no cut vertices. In all other cases such components (connected components with no cut edges nor cut vertices) of $G$ can be realized in the same way and cut edges can be realized in any fashion (as they do not appear in any cycle).

In what follows, all edges of $G$ are realized as non-stabilized arcs. Let $C$ be one of the cycles of $G$ and $s$ the number of vertices of $C$. 

Realize $C$ in such a way that its front projection consists of $s-1$ horizontal edges and one edge on top of these. Label the vertices on $C$ with $v_1$, $v_2$, ... , $v_s$, in this order, from left to right. 
Realize all the other edges between vertices of $C$ on top of the horizontal edges of $C$, in a nested fashion as shown in Figure \ref{noK4_one}(a).  
By Remark \ref{alternates}, we can realize these edges without any crossings. Otherwise, the four endpoints of two crossing edges represent the vertices of a $K_4$.

Next, for each pair of vertices $(v_i, v_j)$, $i<j$, of $C$ for which there exists at least one path between $v_i$ and $v_j$  not containing any edges of $C$, realize one of these paths under the horizontal line of $C$.  
Call this path $P_{ij}^1$. For each $t >1$, if there is another path between $v_i$ and $v_j$ not containing edges of $C$ or edges of $P_{ij}^1$, ..., $P_{ij}^{t-1}$, realize this path under $P_{ij}^{t-1}$.  See Figure \ref{noK4_one}(b). Denote by 
$$\mathcal{P}_{ij}:= \{P_{ij}^1, P_{ij}^2, ... , P_{ij}^t, ... \}$$
This is a finite set.

By construction, all paths in $\mathcal{P}_{ij}$ are nested, without any crossings, they have the vertices $v_i$ and $v_j$ in common and are disjoint otherwise.
By Remark \ref{alternates}, we can realize all the paths appearing thus far without any crossings.

Additionally, for any vertices $v_i$, $v_j$, $v_k$ and $v_l$ of $C$, with $(v_i,v_j)\ne (v_k,v_l)$,  no element of $\cp_{ij}$ has any vertices (distinct from $v_i$, $v_j$, $v_k$ and $v_l$) or edges in common with any element of $\cp_{kl}$.  
We prove the observation by contradiction.  
There are four different cases of $(v_i,v_j)\ne (v_k,v_l)$, that is: (1) $i=k$ and $j\ne l,$ (2) $i\ne k$ and $j=l,$ (3) $j=k$, and (4) $i, j, k,$ and $l$ are all distinct.
Suppose a path $P$ in $\cp_{ij}$ and a path $Q$ in $\cp_{kl}$ have at least one vertex other than $v_i$, $v_j$, $v_k$ and $v_l$ in common.  
Let $\mathcal{V}$ be the non-empty set of vertices that the paths $P$ and $Q$ have in common distinct from $v_i$, $v_j$, $v_k$ and $v_l$.
Let $w_1$ be the left most vertex on $P$ with $w_1\in\mathcal{V}$, and let $w_2$ be the right most vertex on $P$ with $w_2\in\mathcal{V}$.  (We need only show that a contiguous overlap cannot occur, for if there are paths with multiple overlaps there are paths with a single overlap.)   For the subcases (1), (3), and (4), let $w_i\in\{w_1, w_2\}$ be the vertex in this set closest to $v_l$ on the path $Q$.  
Then the part of the path $Q$ from $w_i$ to $v_l$ is distinct from the path $P$.  
So this path between $w_i$ and $v_l$ together with the two parts of $P$ going from $v_i$ to $w_i$ and from $w_i$ to $v_j$ are all distinct paths joining $w_i$ to the cycle $C$.  
Thus by Remark \ref{triangle} we have a contradiction.  Similarly, for the case (3)  let $w_i\in\{w_1, w_2\}$ be the vertex in this set closest to $v_i$ on the path $P$.  
Then the part of the path $P$ from $w_i$ to $v_i$ together with the two parts of $Q$ going from $v_k$ to $w_i$ and from $w_i$ to $v_l$ are all distinct paths joining $w_i$ to the cycle $C$.  
Thus by Remark \ref{triangle} we have a contradiction.

Further, we treat each of the elements in $\cp_{ij}$, $\forall i,j$,   $1\le i<j \le s$, as if it were the path of horizontal edges in $C$. 
We realize the edges with vertices on each such path above the path and the paths with endpoints on each such path below the path.  
We continue to realize each new edge on top of the path its vertices end lie on and each new path below the path its vertices lie on.


\begin{figure}[htpb!]
\begin{center}
\begin{picture}(450, 85)
\put(0,0){\includegraphics{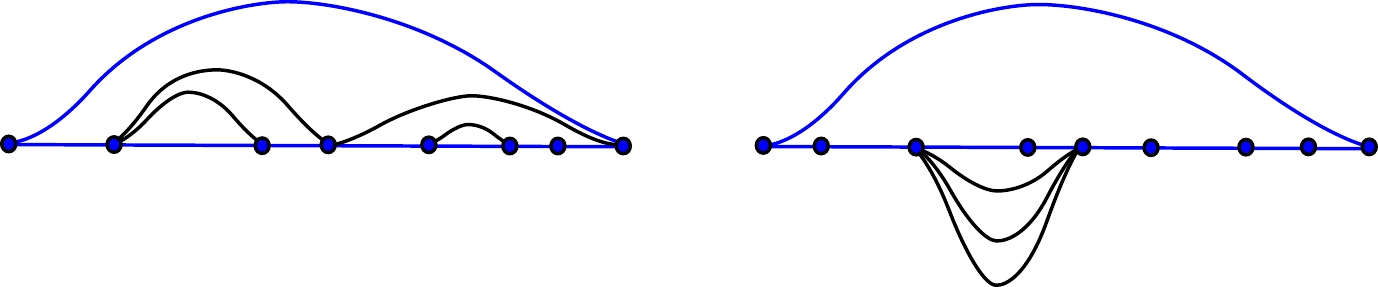}}
\put(85,0){(a)}
\put(305,0){(b)}
\put(60,85){\color{blue}{$C$}}
\put(288,87){\color{blue}{$C$}}
\put(0,32){$\small {v_1}$}
\put(28,32){$\small {v_2}$}
\put(263,49){$\small {v_i}$}
\put(310,49){$\small {v_j}$}
\put(160,32){$\small {v_{\tiny s-1}}$}
\put(180,32){$\small {v_s}$}
\end{picture}
\caption{\small First steps in Legendrian realizing $G$ with all cycles unknots of maximal $tb$. The cycle $C$ is blue.}\label{noK4_one}
\end{center}
\end{figure}


Claim:
\begin{enumerate}
\item  This is a  Legendrian embedding of $G$ (all edges and vertices of $G$ are realized).
\item  All cycles in this embedding have maximal $tb=-1$. 
\end{enumerate}
Proof of (1): The above construction is a prescription for a front projection thus as long as all vertices and edges of $G$ are realized we have a Legendrian embedding. 
The two cases of assuming a vertex was not realized and assuming an edge was not realized will be treated concurrently.  If there exists an edge of $G$ which has not been realized, then there is an edge or a  path in $G$ between two realized vertices that has not been realized.  
If there exists a vertex of $G$ which has not been realized, then since $G$ was assumed without cut edges and connected, there is a path in $G$ between two realized vertices that has not been realized.
We show that the existence of such an edge or path leads to a contradiction. 

Let $v$ and $w$ represent two realized vertices of $G$ such that there is a non-realized edge or path (containing exclusively edges that have not been realized) between $v$ and $w$. 
The vertices $v$ and $w$ cannot both be in $C$, since all paths and edges between these vertices have been realized. 
For the same reason, $v$ and $w$ cannot both be vertices on the same element of a $\cp_{ij}$, or on the same path realized at a later stage.  

From the vertex $w$ we will form a cycle, $\mathcal{C}_w$.  
The cycle $\mathcal{C}_w$ is made by taking the path to the right of $w$ formed by choosing the edge that is to the right and upper most at each vertex (this path will connect $w$ with $v_s$), the path to the left of $w$ made by taking the edge that is to the left and upper most at each vertex (this path will connect $w$ with $v_1$), together with the edge connecting $v_1$ and $v_s$.  
The vertex $v$ is not on the cycle $\mathcal{C}_w$ , or we would have realized a path between these vertices.  
Now let $\cp_x$ be the path formed by starting at $v$ choosing the edge that is to the right and upper most at each vertex until the vertex is a vertex of $\mathcal{C}_w$ , call this vertex $x$, and let $\cp_y$ be the path formed by starting at $v$ choosing the edge that is to the left and upper most at each vertex until the vertex is a vertex of $\mathcal{C}_w$, call this vertex $y$.  
The vertices $w$, $x$ and $y$ are all on the cycle $\mathcal{C}_w$, the (edges or) paths $\cp_x$, $\cp_y$, and the unrealized edge or path from $v$ to $w$ are all distinct, thus by Remark \ref{triangle}, this cannot occur.    
Therefore there is no such unrealized edge or path.  

 Proof of (2): Since the construction does not contain any crossings, we need only show that no cycle exhibits a stabilization. 
Assume there is a cycle in the embedding which represents a stabilized unknot. 
Since all edges were realized as non-stabilized arcs in the first place, at each cusp of this unknot there is a vertex of $G$.  
Since a stabilization occurs, there are at least two left cusps and two right cusps.  
We denote the vertices at the left cusps by $w_1$ and $w_3$ and the vertices at the right cusps by $w_2$ and $w_4$, with the four vertices appearing in order $w_1$, $w_2$, $w_3$, $w_4$ in the cycle. 
Without loss of generality we may assume that either $w_2$, $w_3$, and $w_4$ are at consecutive right-left-right cusps and $w_1$ is on the left of $w_3$ (Figure \ref{noK4_three}, (a)) or $w_1$, $w_4$, and $w_3$ are consecutive left-right-left cusps and $w_2$ is on the right of $w_4$ (Figure \ref{noK4_three}, (b)). We show that a $K_4$ necessarily exists.

\begin{figure}[htpb!]
\begin{center}
\begin{picture}(380, 108)
\put(0,0){\includegraphics{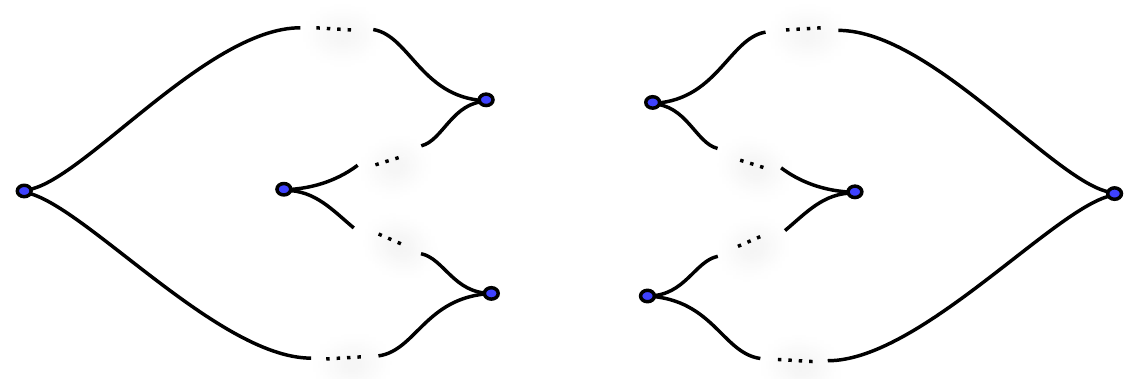}}
\put(-8,52){$w_1$}
\put(144,78){$w_2$}
\put(66,52){$w_3$}
\put(146,22){$w_4$}
\put(174,78){$w_1$}
\put(326,51){$w_2$}
\put(172,22){$w_3$}
\put(251,52){$w_4$}
\put(78,-10){(a)}
\put(248,-10){(b)}
\end{picture}
\caption{\small No cycle is a stabilized unknot.}\label{noK4_three}
\end{center}
\end{figure}

We prove the case pictured in Figure \ref{noK4_three}(a). 
The other case is similar.  
We consider a path  $P_2$ which starts at $w_2$ and always follows along an edge to the right at each vertex. 
We also consider a path $P_4$ which starts at $w_4$ and always follows along an edge to the right at each vertex. 
Denote the first vertex where $P_2$ and $P_4$ intersect by $u$. 
The vertex $u$ may be $w_2$, $w_4$, $v_s$, or a point in between. 
The path which follows $P_2$ from $w_2$ to $u$ and then $P_4$ from $u$ to $w_4$ is a path between $w_2$ and $w_4$, which is disjoint from both $w_1$ and $w_3$, since both $w_1$ and $w_3$ lie on the left of both $w_2$ and $w_4$. 
This is a third path between $w_2$ and $w_4$, in addition to the two included in the stabilized cycle. Now, consider a path $P_3$ which starts at $w_3$ and always follows along an edge to the left at each vertex.  
The path $P_3$ will eventually reach $v_1$, and will intersect the stabilized cycle in a vertex $u'$ situated on the arc of the cycle which goes between $w_2$ and $w_4$ and does not contain $w_3$.  
The vertices $u'$, $w_2$, $w_3$, $w_4$ are the vertices of a (subdivision of) $K_4$.  
\end{proof}

\begin{remark} Recall, a graph $G$ is \emph{minor minimal} with respect to a property if $G$ has the property, but no minor of $G$ has the property.  
Corollary \ref{unknot} together with Theorem \ref{thmGrec} show that $K_4$ is minor minimal with respect to the property of not having a Legendrian embedding with all cycles trivial unknots. 
Not only that, but Theorem \ref{thmGrec} shows that $K_4$ is the only graph in this minor minimal set,  thus characterizing this property.  
\end{remark}


\section{Legendrian graphs classified by classical invariants}\label{Classification}

It is known that certain types of Legendrian knots and links are determined by the classical invariants $tb$ and $rot$ in $(\mathbb{R}^3, \xi_{std})$.  
In \cite{EF}, Eliashberg and Fraser showed that the Legendrian unknot is determined by $tb$ and $rot$. 
In \cite{EH}, Etnyre and Honda showed the same holds for torus knots and the figure eight knot, and in \cite{DG}, Ding and Geiges showed that links consisting of an unknot and a cable of that unknot, are classified by their oriented link type and the classical invariants in $(\mathbb{R}^3, \xi_{std})$.

In this section we investigate what types of spatial graphs are classified up to Legendrian isotopy by the pair $(tb, rot)$.
It is useful to recall that within each Legendrian isotopy class, at each vertex, the edges appear in a fixed cyclic order.  
For now we consider graphs that have cut edges or cut vertices.
\begin{remark}\label{cute}
Let $G$ be a graph containing a vertex of valence at least three which is incident to at least one cut edge. 
For different Legendrian realizations of $G$, the order of edges at this vertex can differ, while the classical invariants for all cycles are the same. 
This is because the cut edge $e$ does not appear in any cycle. 
See Figure \ref{tbrot}(a). 
That is, a Legendrian embedding of $G$ is not determined by the pair $(tb, rot)$. 

\end{remark}


\begin{figure}[htpb!]
\begin{center}
\begin{picture}(300, 150)
\put(0,0){\includegraphics{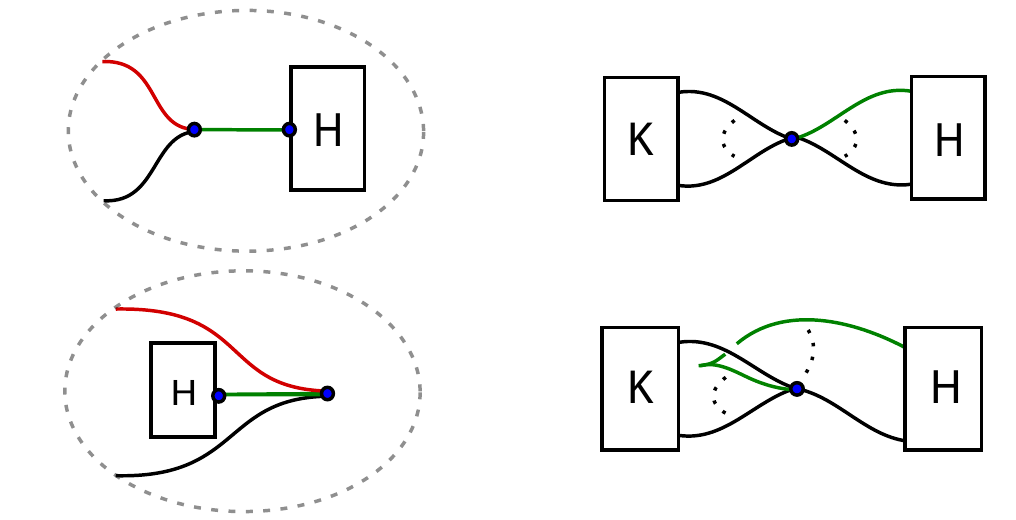}}
\put(65,-10){(a)}
\put(225, -10){(b)}
\put(68, 116){$e$}
\put(93, 28){$v$}
\put(52, 103){$v$}
\put(226, 100){$v$}
\put(227, 28){$v$}
\end{picture}
\caption{\small The order of edges around vertex $v$ is not the same above as below.}
\label{tbrot}
\end{center}
\end{figure}

\begin{remark}\label{cutv}
Let $G$ be a graph containing a cut vertex of valence at least four. 
For different Legendrian realizations of $G$, the order of edges at this vertex can differ, while the classical invariants for all cycles are the same. 
For an example see Figure \ref{tbrot}(b). 
Note that there are no cycles containing edges from both $K$ and $H$. 
In particular, the modified edge is not  in such a cycle, thus $tb$ and $rot$ are the same for the two embeddings. 
That is, a Legendrian embedding of $G$ is not determined by the pair $(tb, rot)$. 
 \end{remark}

This means that even uncomplicated graphs carry more information as a whole than the set of knots represented by their cycles.
The above remarks can be summarized into the following:

\begin{proposition}
No graph containing at least one cycle and at least one cut edge or one cut vertex is determined up to Legendrian isotopy by the pair ($tb$,  $rot$).
\label{cutedge}
\end{proposition}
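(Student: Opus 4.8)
The plan is to prove Proposition \ref{cutedge} by case analysis on which degenerate feature the graph possesses, reducing everything to the two building-block situations already treated in Remarks \ref{cute} and \ref{cutv}. The key observation underlying the whole argument is that a cut edge never lies on any cycle, and that any cycle through a cut vertex must stay entirely within a single connected component of the graph obtained by deleting that vertex. Consequently, moving such a distinguished edge around the vertex alters the combinatorial embedding (the cyclic order of edges at the vertex) without touching the knot type, the Thurston-Bennequin number, or the rotation number of any cycle. Since $(tb,rot)$ is by definition the ordered list of these invariants over all cycles, two embeddings that differ only by such a move have identical $(tb,rot)$ yet are not Legendrian isotopic, because the cyclic order of edges at a vertex is a Legendrian isotopy invariant (as recorded in the Remark following Proposition \ref{leggraph}).

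First I would fix a graph $G$ satisfying the hypotheses: $G$ has at least one cycle and at least one cut edge or one cut vertex. I would split into two principal cases. \textbf{Case 1: $G$ has a cut edge $e$.} Let $v$ be an endpoint of $e$. Because $G$ contains a cycle, and a single edge together with one vertex cannot support a cycle, I can arrange that the relevant endpoint $v$ has valence at least three after, if necessary, first checking that some vertex incident to a cut edge has valence $\ge 3$; if every endpoint of every cut edge had valence $\le 2$ one traces the cut edge outward until reaching a branch vertex or verifies the component carrying the cycle is reached. The substantive content is then exactly Remark \ref{cute}: produce two Legendrian realizations of $G$ differing only by move VI sliding the cut edge $e$ from one side of $v$ to the other, changing the cyclic order at $v$ while leaving all cycles untouched.

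\textbf{Case 2: $G$ has a cut vertex $v$.} Deleting $v$ splits $G$ into components; group the edges at $v$ into two nonempty bunches $H$ and $K$ according to the component they lead into, so $v$ has valence at least two and in the interesting subcase at least four. No cycle of $G$ uses edges from both $H$ and $K$, since any such cycle would give a path in $G - v$ joining the two components. I would then invoke Remark \ref{cutv}: take an edge incident to $v$ and reorder it past the edges of the other bunch, again realizing two embeddings with the same list of cycle invariants but distinct cyclic orders at $v$.

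The main obstacle, and the only place requiring genuine care, is the bookkeeping at the boundary of the two cases: a graph might have a cut vertex of low valence or a cut edge whose endpoints are of valence two, situations not literally covered by the valence hypotheses in Remarks \ref{cute} and \ref{cutv}. I expect to handle this by observing that the hypothesis ``contains at least one cycle'' forces the existence of a vertex of valence $\ge 3$ somewhere, and that a cut vertex or cut edge can be pushed along degree-two paths (which are forced by Legendrian isotopy and carry no choice of cyclic order) until it abuts a branch vertex of valence $\ge 3$; at that branch vertex the earlier Remarks apply verbatim. I would close by noting that since $tb$ and $rot$ agree across the two constructed embeddings while they are not Legendrian isotopic, $(tb,rot)$ fails to determine $G$, which is exactly the assertion.
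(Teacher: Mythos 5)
Your proposal is correct and takes essentially the same route as the paper: both arguments reduce everything to Remarks \ref{cute} and \ref{cutv}, using the invariance of the cyclic edge order at vertices, and both handle the low-valence cases by the same bookkeeping (a valence-two endpoint of a cut edge forces the adjacent edge to be a cut edge, a valence-three cut vertex is incident to a cut edge, etc.). The only difference is packaging: the paper runs the valence argument contrapositively, concluding that a graph determined by $(tb,rot)$ with a cut edge or cut vertex must be a path graph or a single edge and hence acyclic, while you run it directly, tracing from the cut edge or cut vertex to a branch vertex where one of the Remarks applies.
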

\begin{proof}  Suppose that $G$ has at least one cut edge or vertex and is determined by the pair ($tb$,  $rot$).  
Suppose $G$ has a cut vertex $v$, then by Remark \ref{cutv} the vertex $v$ must have valence 3 or less.    
If $v$ is a cut vertex with valence 3 then it must be adjacent to a cut edge, but by Remark \ref{cute} this would imply that $G$ is not determined by the pair ($tb$,  $rot$).  
So $v$ must have valence 2 or less.  However, to be a cut vertex it must have at least valence two.  
For a valence 2 cut vertex both incident edges are cut edges, so by Remark \ref{cute} both of the vertices defining these edges must have valence 2 or less.  
Therefore the only such graph with a cut vertex is a path graph.  
So $G$ has no cycles.  

Suppose $G$ has a cut edge $e$ with no cut vertices. By Remark \ref{cute} the valences of the two vertices must be 2 or less.
Since the two vertices cannot be cut vertices they are of valence one.
Thus the graph $G$ is a single edge.
Thus there does not exist a graph containing at least one cycle and at least one cut edge or one cut vertex that is determined by the pair ($tb$,  $rot$).
\end{proof}

Next, we focus on Legendrian embeddings of the lollipop graph and the handcuff graph. 
See Figure \ref{lollipop}.
Both these graphs have one cut edge. For any topological class of these graphs the Legendrian class cannot be determined by the pair $(tb, rot)$, by Proposition \ref{cutedge}.

\begin{figure}[htpb!]
\begin{center}
\begin{picture}(355, 48)
\put(0,0){\includegraphics{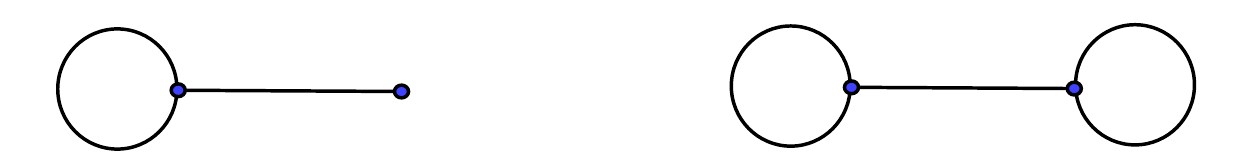}}
\put(80,-5){(a)}
\put(272, -5){(b)}
\end{picture}
\caption{\small (a) the lollipop graph  (b) the handcuff graph}
\label{lollipop}
\end{center}
\end{figure}

\begin{definition}
A \textit{planar spatial graph} is a spatial graph which is ambient isotopic to an embedding in the plane. 
A  \textit{ Legendrian planar graph} (or a \textit{planar Legendrian realization}) is a Legendrian realization of a planar spatial graph. 
\end{definition}

We show that if we restrict to planar spatial graphs, a pair $(tb, rot)$ determines exactly two Legendrian isotopy classes of the lollipop graph and
a pair $(tb, rot)$ determines exactly four Legendrian isotopy classes of the handcuff graph. 
We do this by constructing a Legendrian isotopy between an arbitrary embedding and a \textit{standard form} embedding. We define a standard form embedding below. 

\begin{definition}
\begin{enumerate}
\item[]
\item We say a Legendrian unknot is in \textit{standard form} if it is the lift of a front projection as in Figure \ref{standard_unknotEF}(a) or (b). The front projection in Figure ~\ref{standard_unknotEF}(a) represents two distinct Legendrian classes, depending on the chosen orientation. For the front projection shown in Figure ~\ref{standard_unknotEF}(b) both orientation give the same Legendrian class, we fix the orientation to be the one which makes the left cusp a down cusp.
\item We say a planar Legendrian realization of the lollipop graph is \textit{in standard form} if it is the lift of a front front projection consisting of one front projection of an unknot $U$ in standard form as in Figure \ref{standard_unknotEF} and a nonstabilized arc at the lower right cusp of the unknot. The arc can sit in one of two ways with respect to the other edge segments coming together at the vertex. 
We say the planar Legendrian  realization of the lollipop graph is in \textit{standard form A} or \textit{B} if the cut edge sits as in Figure \ref{standard_vertex_lollipop}(a) or (b), respectively.
\item We say a planar Legendrian realization of the handcuff graph is \textit{in standard form} if it is the lift of a front projection consisting of two non-crossing front projections of unknots $U_1$ and $U_2$ each in standard form as in Figure \ref{standard_unknotEF}, one on the left and one on the right, and a nonstabilized arc between the lower right cusp of the unknot on the left and the leftmost cusp of the unknot on the right. 
The arc can sit in one of two ways with respect to the other edge segments coming together at each vertex.
 We say the planar Legendrian realization of the handcuff graph is in \textit{standard form AA, AB, BA} or \textit{BB} if the cut edge sits as in Figure \ref{standard_vertex}(a), (b), (c) or (d), respectively.

\end{enumerate}
\end{definition}
Figure \ref{standard_handcuff} represents a handcuff graph in standard form $AA$, with both unknotted components with $rot \ne 0$.

\begin{figure}[htpb!]
\begin{center}
\begin{picture}(330, 70)
\put(0,0){\includegraphics{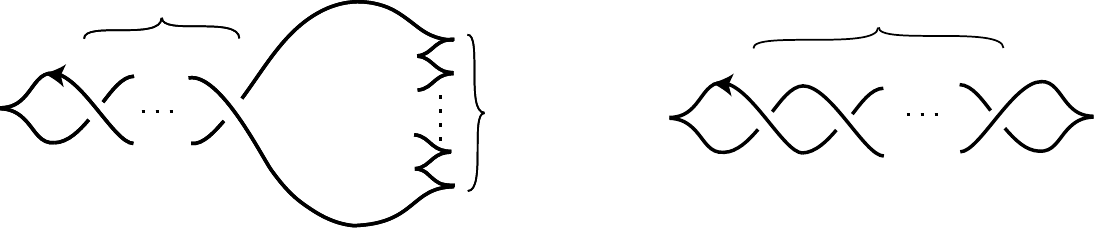}}
\put(55,-6){(a)}
\put(249, -6){(b)}
\put(35,65){\small $2t+1$}
\put(145, 31){\small $s$}
\put(250,63){\small $2t$}
\end{picture}
\caption{\small Legendrian unknot in standard form: (a) $rot (K) > 0$ (reverse orientation gives $rot(K)< 0$), (b) $rot(K) =0$.}
\label{standard_unknotEF}
\end{center}
\end{figure}

\begin{figure}[htpb!]
\begin{center}
\begin{picture}(216, 32)
\put(0,0){\includegraphics{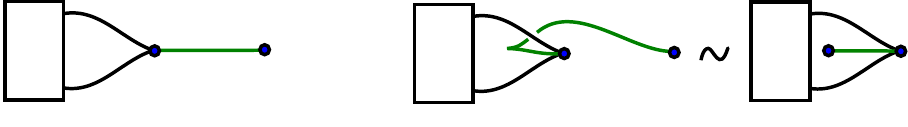}}
\put(30,-5){\small (a)}
\put(180,-5){\small (b)}
\put(7, 18){\small $U$ }
\put(125, 18){\small $U$ }
\put(222, 18){\small $U$ }
\end{picture}
\caption{\small Planar Legendrian realization of the lollipop graph in (a) standard form $A$ , (b)  standard form $B$.}
\label{standard_vertex_lollipop}
\end{center}
\end{figure}

\begin{figure}[htpb!]
\begin{center}
\begin{picture}(320, 90)
\put(0,0){\includegraphics{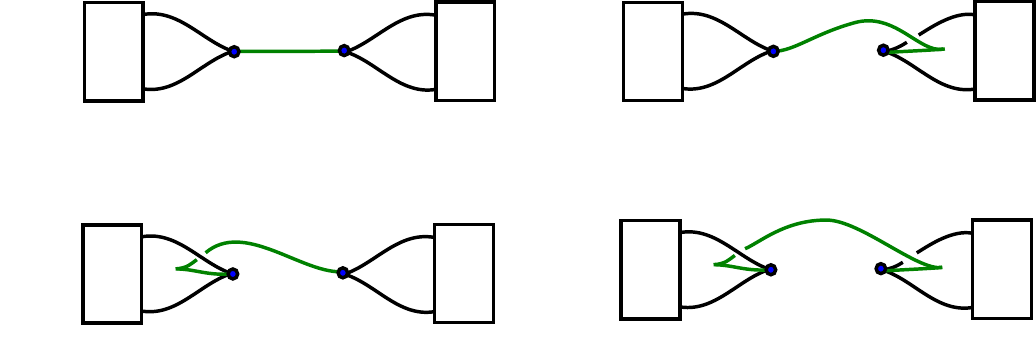}}
\put(80,58){\small (a)}
\put(232,58){\small (b)}
\put(80,-5){\small (c)}
\put(232, -5){\small (d)}
\put(28, 18){\small $U_1$ }
\put(130, 18){\small $U_2$ }
\put(184,18){\small $U_1$}
\put(286, 18){\small $U_2$}
\put(28, 82){\small $U_1$ }
\put(130, 82){\small $U_2$ }
\put(184,82){\small $U_1$}
\put(286, 82){\small $U_2$}
\end{picture}
\caption{\small Planar Legendrian realization of the handcuff graph in (a) standard form $AA$, (b)  standard form $AB$, (c) standard form $BA$, (d)  standard form $BB$.}
\label{standard_vertex}
\end{center}
\end{figure}

In \cite{EF}, Eliashberg and Fraser showed that a Legendrian unknot $K$ is Legendrian isotopic to a unique unknot in standard form. 
The number of cusps and crossings of the unknot in standard form (see Figure \ref{standard_unknotEF}) are uniquely determined by $tb(K)$ and $rot(K)$ as follows:
\begin{enumerate}
\item If $rot(K)\ne 0$ (Figure \ref{standard_unknotEF}(a)), then 
$$ tb(K) = -(2t+1+s)  $$
$$ rot(K) =  \left\{
\begin{array}{rl}
s, & \mbox{ if  the leftmost cusp is a down cusp} \\
-s, & \mbox{ if the leftmost cusp is an up cusp}  \end{array}
\right.  $$
\item If $rot(K) =0$  (Figure \ref{standard_unknotEF}(b)), then 
$$ tb(K) = -(2t+1)  $$
\end{enumerate}

\begin{figure}[htpb!]
\begin{center}
\begin{picture}(320, 72)
\put(0,0){\includegraphics{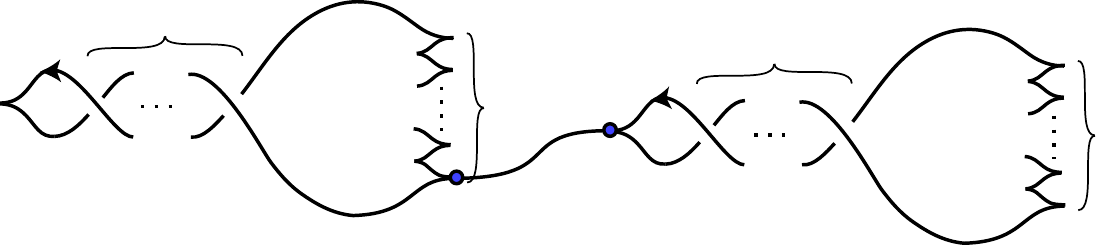}}
\put(35, 65){\small $2t_1+1$}
\put(213, 57){\small $2t_2+1$}
\put(142, 38){\small $s_1$}
\put(318, 29){\small $s_2$}
\end{picture}
\caption{\small Legendrian handcuff graph in standard form $AA$ depicted with both unknotted components with $rot > 0$.}
\label{standard_handcuff}
\end{center}
\end{figure}

\begin{lemma}
Let $G$ be a Legendrian graph consisting of a Legendrian knot and a cut edge connected to it. Through Legendrian isotopy, the cut edge can be moved to be connected at any point of the knot.
\label{slide0}
\end{lemma}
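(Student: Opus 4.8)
The plan is to realize the desired repositioning as an explicit slide of the attaching vertex along the knot, checking that every elementary event encountered during the slide is one of the Legendrian graph isotopy moves of Figure \ref{moves} (together with Legendrian planar isotopy). Write $K$ for the Legendrian knot, $v$ for the vertex of $K$ at which the cut edge $e$ is attached, and $p$ for the target point of $K$ at which we wish to reattach $e$.

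First I would make the problem local. Since $e$ meets $K$ only at $v$, its free endpoint can be pulled back along $e$ toward $v$; this contracts $e$ to an arbitrarily short Legendrian \emph{whisker} supported in a small neighborhood of $v$, and it is a Legendrian isotopy of the graph because the shrinking arc stays disjoint from $K\setminus\{v\}$ throughout (any zig-zags of $e$ are simply carried along near $v$). After this reduction it suffices to slide a short whisker along $K$ from $v$ to $p$. Because the whisker never leaves a small neighborhood of its attaching point, it can never become linked with a distant portion of $K$, so no crossing change between $e$ and $K$ is ever forced; every interaction is local to the arc of $K$ on which the whisker currently sits.

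Next I would parametrize $K$ as a loop and move the attaching vertex monotonically from $v$ to $p$, enumerating the events. Along a smooth crossing-free portion of $K$ the slide is a Legendrian planar isotopy, with a move VI used whenever the whisker must be rotated to the other side of the strand at the vertex. When the vertex passes a cusp of $K$, the slide is exactly move IV. When the vertex passes a crossing of $K$, the transverse strand sweeps over or under the vertex, which is move V; moves VI before and after position the whisker on the correct side so that this passage is unobstructed. Since each event is realized by one of IV, V, VI, or planar isotopy, the composite slide is a Legendrian isotopy carrying the cut edge to $p$ (its free end may then be re-extended if one wishes).

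I expect the main obstacle to be the case analysis at crossings: one must check that, for either choice of over/under strand and for either side on which the whisker initially lies, the whisker can indeed be routed past the transverse strand using only moves V and VI rather than an illegal crossing change. The reduction to a short whisker is precisely what makes this manageable, since it guarantees that each passage is a genuinely local move.
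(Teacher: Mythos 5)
Your proposal is correct and takes essentially the same approach as the paper: work in the front projection, retract the cut edge to a short arc near its attaching vertex, and slide the vertex along the knot, realizing each local event by the graph moves (planar isotopy away from cusps and crossings, moves IV and VI at cusps, move V at crossings). The paper's proof is just a more explicit version of your case analysis, exhibiting in Figure \ref{slide} the precise sequences of moves IV, VI, and retractions needed to pass a cusp.
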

\begin{proof}
We work with a front projection of the graph $G$. Away from the cusps the cut edge can be moved by planar isotopy. 
A cut edge can be passed through a right cusp as in Figure \ref{slide} (below or above, depending on how it sits with respect to the cusp). 
Passing from the lower strand to the upper strand  of a right cusp can be obtained by vertical reflection of the two illustrated cases. 
Diagrams for passing through a left cusp can obtained by horizontal reflection of the diagrams for the right cusp.
\end{proof}


\begin{figure}[htpb!]
\begin{center}
\begin{picture}(430, 130)
\put(0,0){\includegraphics{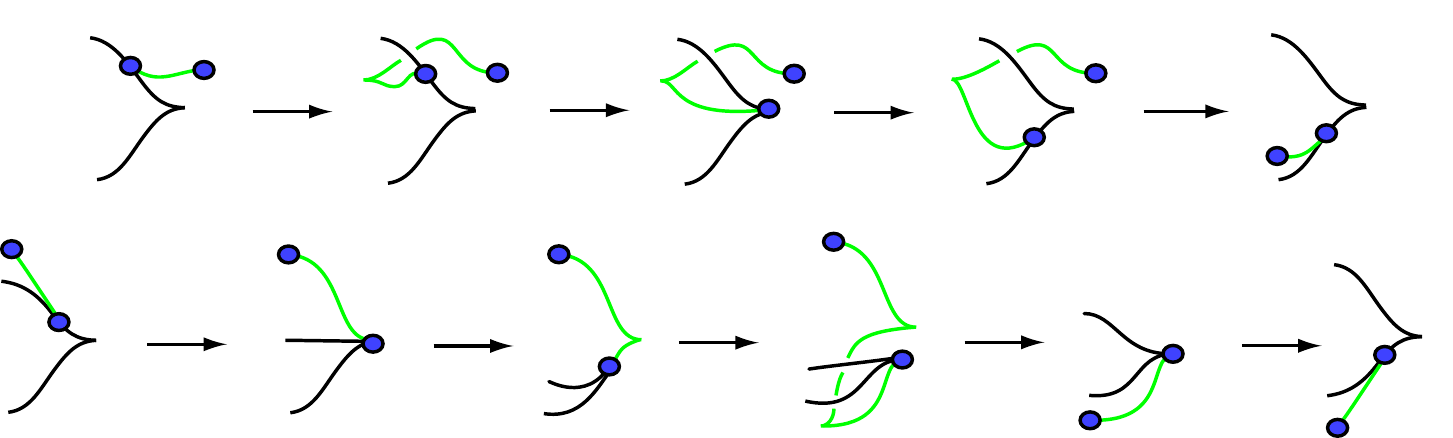}}
\put(75, 98){\small VI}
\put(163, 98){\small IV}
\put(246, 98){\small IV}
\put(330, 98){\tiny retract}
\put(48, 31){\small IV}
\put(129, 31){\small IV}
\put(198, 31){\small VI}
\put(278, 31){\tiny retract}
\put(363, 31){\small IV}
\end{picture}
\caption{\small Sliding the cut edge past a right cusp.}
\label{slide}
\end{center}
\end{figure}


\begin{theorem} 

A pair $(tb, rot)$ determines exactly two Legendrian isotopy classes for a planar Legendrian realization of the lollipop graph.

\end{theorem}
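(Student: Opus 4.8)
The plan is to prove that every planar Legendrian realization of the lollipop graph is Legendrian isotopic to one of the two standard forms $A$ or $B$ of Figure \ref{standard_vertex_lollipop}, and then that these two forms are not Legendrian isotopic to one another. Write $G$ for the lollipop graph, with unique cycle $U$ (a piecewise smooth Legendrian unknot), cut edge $e$, and valence-three vertex $v$ at which $e$ meets $U$. Since $e$ lies in no cycle, the only data contributing to $(tb(G), rot(G))$ is the pair $(tb(U), rot(U))$.

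First I would normalize the cycle. By Proposition \ref{isosmoothing} the Legendrian class of $U$ coincides with that of its standard smoothing, and by the Eliashberg--Fraser classification of the Legendrian unknot \cite{EF} this smoothing is determined by $(tb(U), rot(U))$ and is isotopic to the standard-form unknot of Figure \ref{standard_unknotEF}. I would carry out this isotopy on the whole graph, dragging $e$ along. Because $G$ is a \emph{planar} spatial graph, $e$ is topologically unknotted and unlinked from $U$ and may be kept embedded throughout; then Lemma \ref{slide0} lets me slide the endpoint of $e$ along $U$ to the lower right cusp, its standard position.

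Next I would straighten $e$ itself. Its far endpoint is a free valence-one vertex, so any stabilizations or trivial knotting of $e$ can be pushed toward the free end and retracted off (as in the moves of Figure \ref{slide}), leaving $e$ a nonstabilized arc emanating from $v$. At this point the realization coincides with a standard form except possibly for the angular position of $e$ among the three edges at $v$. Up to cyclic permutation there are exactly two cyclic orders of these three edges---the two ways $e$ can sit relative to the two strands of $U$---and these are precisely forms $A$ and $B$; hence there are at most two classes.

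For distinctness I would invoke the fact that the cyclic order of edges around a vertex is preserved under Legendrian isotopy: forms $A$ and $B$ realize the two different cyclic orders at $v$, so they cannot be isotopic, giving exactly two classes per pair $(tb, rot)$. The step I expect to be the main obstacle is the second paragraph: justifying rigorously that the Eliashberg--Fraser isotopy of $U$ extends over the whole graph and that $e$ can be simultaneously normalized---unstabilized and slid to the standard cusp---without introducing any further invariant. This is exactly where planarity of the spatial graph and Lemma \ref{slide0} must be used carefully.
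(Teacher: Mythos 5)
Your proposal is correct and follows essentially the same route as the paper: normalize the cycle $U$ to its Eliashberg--Fraser standard form while using Lemma \ref{slide0} to slide the cut edge out of the way and then to the lower right cusp, retract/unstabilize the cut edge from its free end, and conclude that the only remaining freedom is the position of $e$ at the vertex, giving forms $A$ and $B$. The one point where you are more explicit than the paper is the distinctness of the two forms via the invariance of the cyclic order of edges at a vertex; the paper leaves this implicit, relying on its earlier remark about edge order under Legendrian isotopy.
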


\begin{proof}

We construct a Legendrian isotopy between $L$ and one of the two standard forms. 
Denote by $v_1$ the valence three vertex of $L$, by $v_2$ the valence one vertex of $L$, denote by $U$ the loop edge of $L$, and denote by $e$ the cut edge of $L$.  We work with a front projection of $L$.

\textit{Step 1} (remove the crossings of the cut edge with itself and with $U$).
Starting from $v_2$ towards $v_1$, retract the edge $e$ in a $\de-$neighborhood of $v_1$,  and remove all its self crossings in the front projection, as well as the crossings between $e$ and $U$. 

\textit{Step 2} (put $U$ in standard form). 
Change $L$ by Legendrian isotopy in a neighborhood of $v_1$ such that the unknot $U$ is everywhere smooth. 
By \cite{EF}, there exists a unique unknot in standard form which is Legendrian isotopic to $U$.  
Take $U$ to standard form through Legendrian isotopy, while keeping $v_2$ and its neighborhood containing $e$ away from the isotopy.
We can do this by sliding the cut edge when necessary, as in Lemma \ref{slide0}.

\textit{Step 3} (slide the cut edge to the lower right cusp of $U$).  Using Lemma \ref{slide0}, slide the cut edge so that it connects to the rest of the graph at the lower right cusp of $U$. 
Starting from $v_2$ towards $v_1$, retract the edge $e$ in a $\de-$neighborhood of $v_1$,  and remove all self crossings in the front projection, as well as the all crossings between $e$ and $U$.

Now the graph is in one of the two standard forms. 
Since a standard form of the unknot is uniquely determined by $tb$ and $rot$, each of the standard forms of the lollipop graph are also determined by $tb$ and $rot$.  
Thus we have two Legendrian isotopy classes for the lollipop graph for each pair $(tb, rot)$. 

\end{proof}


\begin{theorem} 
A pair $(tb, rot)$ determines exactly four Legendrain isotopy classes for a planar Legendrian realization of the handcuff graph.
 \end{theorem}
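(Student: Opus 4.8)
The plan is to mirror the proof just given for the lollipop graph, now constructing a Legendrian isotopy from an arbitrary planar Legendrian realization $H$ to one of the four standard forms $AA$, $AB$, $BA$, $BB$. Denote by $U_1$ and $U_2$ the two loop edges of $H$, by $e$ the cut edge, and by $v_1$ (on $U_1$) and $v_2$ (on $U_2$) the two valence-three vertices. I work throughout with a front projection of $H$.

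\textit{Step 1.} First I remove, by Legendrian isotopy, all self-crossings of the cut edge $e$ together with all crossings of $e$ with $U_1$ and with $U_2$, exactly as in Step 1 of the lollipop proof, by retracting $e$ from $v_2$ towards $v_1$ into a small neighborhood of each endpoint. Since $H$ is a planar spatial graph, the two loops $U_1$ and $U_2$ form a two-component unlink; hence I can also remove all crossings between $U_1$ and $U_2$ and arrange $U_1$ to lie entirely to the left of $U_2$ in the front projection.

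\textit{Step 2.} Next I put each loop into standard form. By \cite{EF} each $U_i$ is Legendrian isotopic to a unique unknot in standard form; I carry out these two isotopies one loop at a time, keeping $e$ and small neighborhoods of $v_1$, $v_2$ away from the moving loop and sliding $e$ out of the way whenever necessary by Lemma \ref{slide0}. Then, again by Lemma \ref{slide0}, I slide the endpoints of $e$ so that $e$ meets $U_1$ at its lower right cusp and meets $U_2$ at its leftmost cusp, clearing any self-crossings of $e$ introduced along the way. At this point $H$ is the lift of a front projection of the prescribed standard type, and the only remaining freedom is how $e$ sits at each of the two vertices; this places $H$ in one of the forms $AA$, $AB$, $BA$, $BB$.

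\textit{Exactly four classes.} The cycles of the handcuff graph are precisely $U_1$ and $U_2$ (the cut edge lies in no cycle), so a pair $(tb,rot)$ fixes $(tb(U_i),rot(U_i))$ for $i=1,2$, and by \cite{EF} this determines the standard form of each loop uniquely. The two remaining binary choices are the position, $A$ or $B$, of the cut edge at $v_1$ and at $v_2$. These are distinguished by the cyclic order of the three edges around each vertex, which is a Legendrian isotopy invariant; since $v_1$ and $v_2$ are valence three, the two admissible cyclic orders correspond exactly to $A$ and $B$. Hence the four standard forms are pairwise non-isotopic, and every realization is isotopic to exactly one of them, giving four Legendrian isotopy classes for each $(tb,rot)$.

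The main obstacle is Step 2: one must check that the two unknots can be normalized independently while the cut edge is continuously kept clear, so that no crossing or stabilization of $e$ against the loops survives the normalization; Lemma \ref{slide0} is exactly what makes the cut edge a harmless spectator during each unknot normalization, and the planarity hypothesis is what allows the two loops to be split apart in Step 1. The distinctness of the four forms then rests entirely on the invariance of the cyclic edge order at the two trivalent vertices.
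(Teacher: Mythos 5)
Your proposal follows the paper's strategy for Steps 1--3 (split the loops using planarity, clear the cut edge, normalize each unknot via \cite{EF} while using Lemma \ref{slide0} to keep the cut edge out of the way, then slide its endpoints to the prescribed cusps), and your argument that the four forms are pairwise distinct via the invariance of the cyclic edge order at the trivalent vertices matches the paper's reasoning. However, there is a genuine gap at the end of your Step 2: you assert that after normalizing $U_1$ and $U_2$ and sliding the endpoints of $e$, the graph ``is the lift of a front projection of the prescribed standard type,'' but the standard forms require the cut edge to be a \emph{nonstabilized} arc, and you never explain how to remove stabilizations (zig-zags) that $e$ may carry. This is precisely where the handcuff case differs from the lollipop case you are mirroring: in the lollipop graph the cut edge has a free valence-one endpoint, so it can be retracted from that end and all of its crossings \emph{and} stabilizations absorbed; in the handcuff graph both ends of $e$ are anchored to the two unknots, so a zig-zag in the middle of $e$ cannot be pushed off an end, and Lemma \ref{slide0} (which only moves the attachment point of the edge along a knot) does not remove it. Since stabilizations of $e$ are invisible to $(tb,rot)$ (the cut edge lies in no cycle), without an argument eliminating them you cannot conclude that every realization reaches one of only four forms.

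The paper resolves exactly this difficulty in its Step 4: a stabilization on the cut edge is undone by \emph{flipping} $U_2$ horizontally, with two separate sequences of moves depending on whether $e$ attaches outside or inside the cusp at $v_2$ (Figures \ref{flip1} and \ref{flip2}), the remaining cases following by reflection. This flip trades the zig-zag on $e$ against the symmetric presentation of $U_2$, which is why the count stabilizes at exactly four classes rather than an infinite family indexed by the number of stabilizations on $e$. Your proof needs this (or an equivalent) mechanism to be complete.
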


\begin{proof}

We construct a Legendrian isotopy between $L$ and one of the four standard forms.  
Denote by $v_1$ and $v_2$ the two vertices of $L$, denote by $U_1$ and $U_2$ the two loop edges of $L$, and denote by $e$ the cut edge of $L$. 
We work with a front projection of $L$. 

\textit{Step 1} (make $U_1$ and $U_2$ disjoint in the front projection). 
Since the graph $L$ is topologically equivalent to the embedding in Figure \ref{lollipop}(b), the two unknots $U_1$ and $U_2$ bound disks $D_1$ and $D_2$ which are disjoint from each other and disjoint from the rest of the graph. 
Shrink the disks $D_1$ and $D_2$ in $\de-$neighborhoods of $v_1$ and $v_2$, with $\de$ small enough for there to exist no crossings between $U_1$ and $U_2$ in the front projection. 

\begin{figure}[htpb!]
\begin{center}
\begin{picture}(390, 280)
\put(0,0){\includegraphics{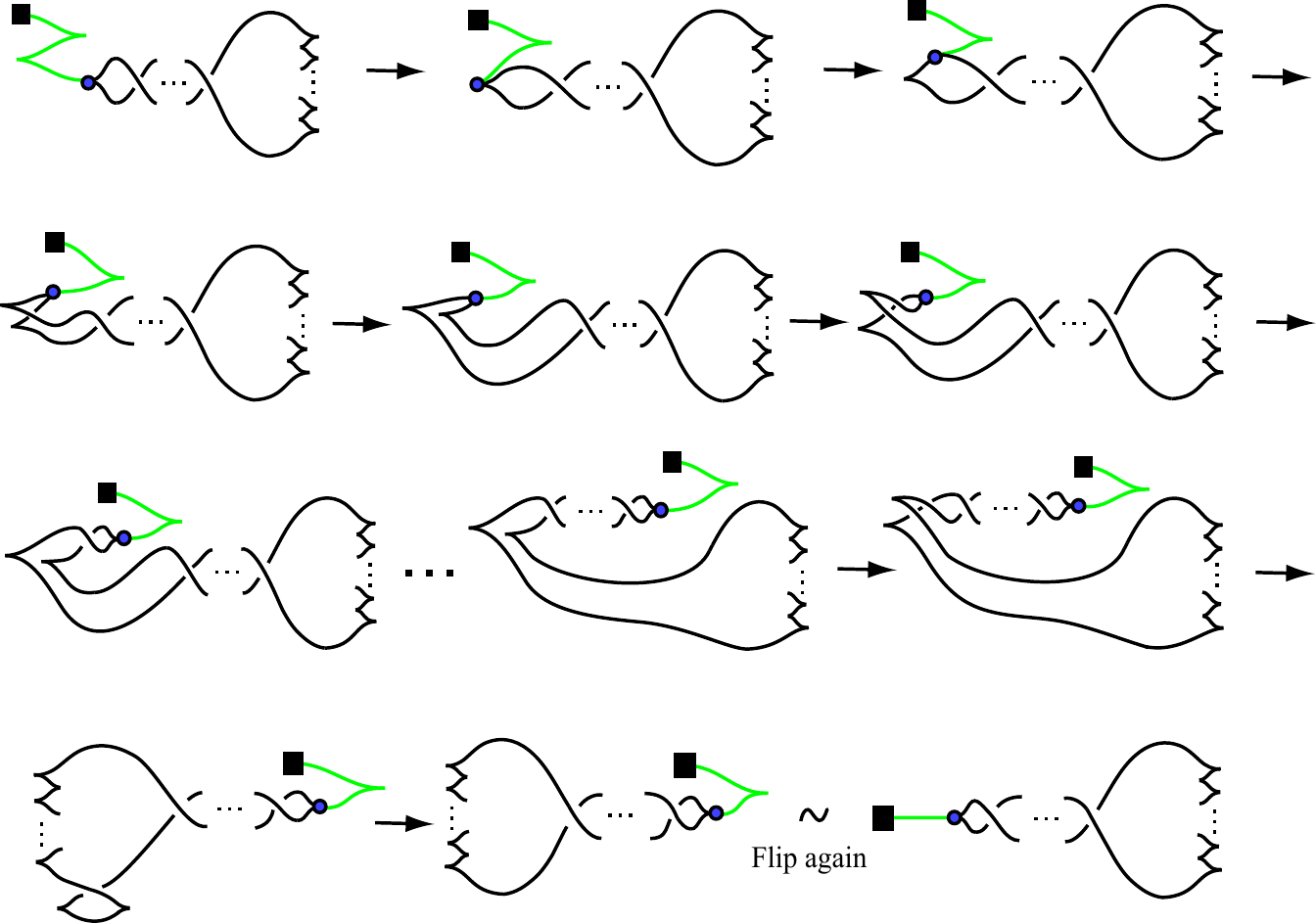}}
\put(110, 256){\small IV}
\put(244, 256){\small IV}
\put(370, 254){\small VI}
\put(100, 181){\small II}
\put(235, 182){\small II}
\put(120, 107){\tiny II's}
\put(373, 182){\small II}
\put(249, 108){\small II}
\put(117, 33){\small I}
\put(40, 260){\tiny $2t+1$}
\put(180, 187){\tiny $2t$}
\put(313, 187){\tiny $2t$}
\put(57, 115){\tiny $2t-1$}
\put(171, 128){\tiny $2t$}
\put(58, 43){\tiny $2t+1$}
\put(117, 33){\small I}

\end{picture}
\caption{\small Flip of $U_2$ undoes the stabilization in the cut edge for the case when the cut edge connects \textit{outside} the cusp at $v_2$. }
\label{flip1}
\end{center}
\end{figure}

\textit{Step 2} (remove crossings of the cut edge with itself and with $U_1$ and $U_2$). 
There exists an embedded $2-$sphere $S_1$ such that $U_1$ is contained in the $3-$ball  $B_1$ bounded by $S_1$ and $S_1$ intersects the cut edge at one point, $w_1$. 
Shrink $B_1$ in a small neighborhood of $w_1$.
Starting from $w_1$ retract the cut edge while carrying along the neighborhood of $w_1$ and undo its knotting outside of $B_1$ as well as all crossings with $U_2$. 

Next, there exists an embedded $2-$sphere $S_2$ disjoint from the sphere $S_1$ such that $U_2$ lies in the $3-$ball $B_2$ bounded by $S_2$ and $S_2$ intersects the cut edge at one point, $w_2$. 
Shrink $B_2$ in a small neighborhood of $w_2$. 
This move may introduce a crossing between $e$ and $U_2$ in the front projection.
Starting from $w_2$ retract the cut edge while carrying along the neighborhood of $w_2$ and undo its knotting outside of $B_2$ as well as all crossings between the cut edge and $U_1$ in the front projection.

\textit{Step 3} (put $U_1$ and $U_2$ in standard form, slide one end of the cut edge to the lower right cusp of $U_1$, and slide the other end to the left cusp of $U_2$).
Take $U_1$ into a small neighborhood of $v_1$.
Modify $U_2$ through a Legendrian isotopy which takes it to the unknot in standard form having the assigned $tb$ and $rot$. 
By sliding the cut edge repeatedly (as in Lemma \ref{slide0}), we can keep $v_1$ and $U_1$ away for this isotopy. 
Once $U_2$ is in standard form, using Lemma \ref{slide0} slide the cut edge so that it connects to $U_2$ at the left cusp of $U_2$. 
The cut edge can sit in two ways with respect to the other two edge segments at this cusp.

Leaving $U_2$ in standard form and leaving $e$ connected to $U_2$ at the left cusp, move $U_1$ and $e$ through Legendrian isotopy so that the front projection of $U_1$ lies outside and to the right of the bounded region in the plane determined by the front projection of $U_2$. 

Modify $U_1$ through a Legendrian isotopy which takes it to the unknot in standard form having the assigned $tb$ and $rot$. 
By sliding the cut edge repeatedly (as in Lemma \ref{slide0}) without contracting it, we leave $U_2$ unchanged.  
Once $U_1$ is in standard form, using Lemma \ref{slide0} slide the cut edge without contracting it so that it connects to $U_1$ at the lower right cusp of $U_1$. 

\begin{figure}[htpb!]
\begin{center}
\begin{picture}(380, 240)
\put(0,0){\includegraphics{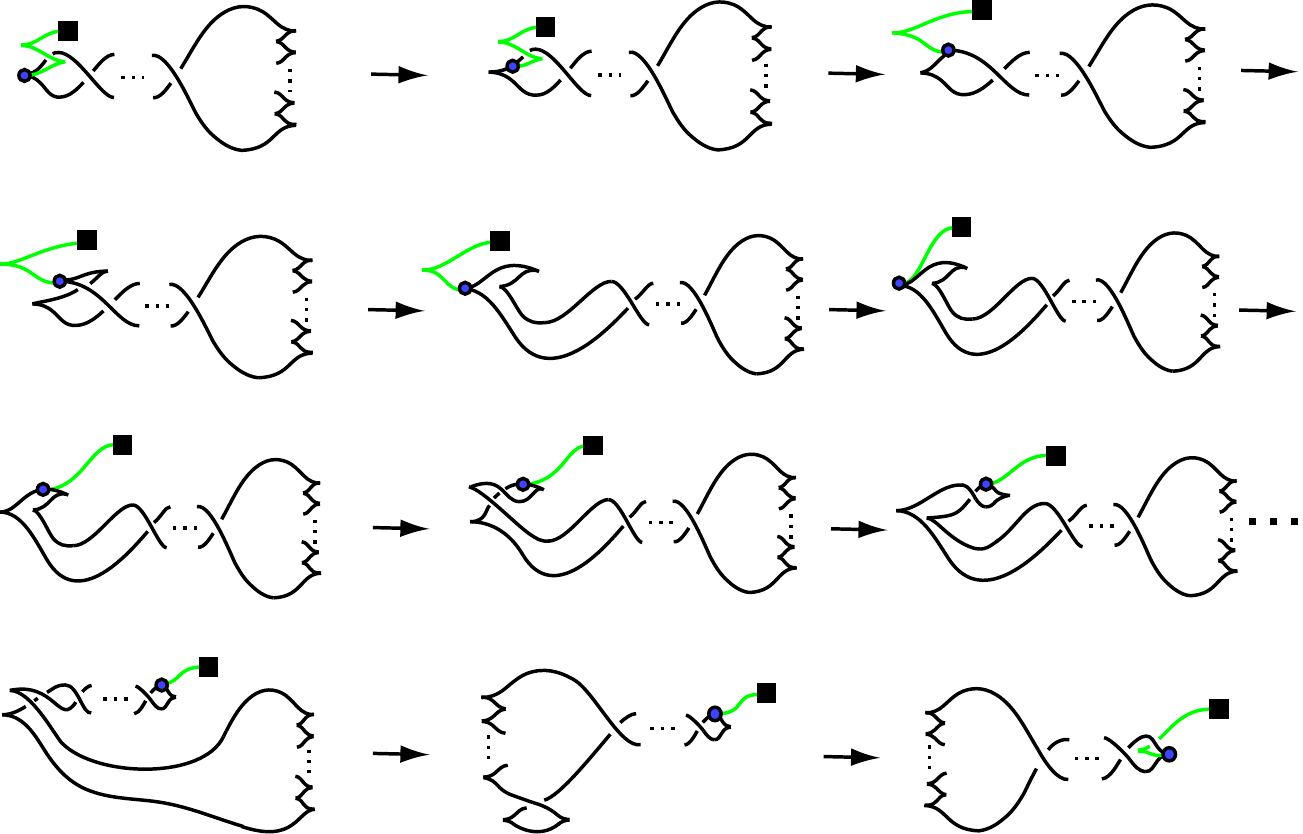}}
\put(107, 224){\small IV}
\put(237, 224){\small VI}
\put(357, 224){\small VI}
\put(107, 155){\small II}
\put(243, 156){\small IV}
\put(359, 155){\small IV}
\put(108, 91){\small II}
\put(241, 91){\small II}
\put(363, 93){\tiny II's}
\put(234, 27){\tiny I,VI,IV}
\put(28, 230){\tiny $2t+1$}
\put(190,162 ){\tiny $2t$}
\put(305, 99){\tiny $2t-1$}
\put(22, 47){\tiny $2t+1$}

\end{picture}
\caption{\small Flip of $U_2$ undoes the stabilization in the cut edge for the case when the cut edge connects \textit{inside} the cusp at $v_2$.}
\label{flip2}
\end{center}
\end{figure}

\textit{Step 4} (undo stabilizations of the cut edge and reach one of the standard forms). 
The cut edge can connect in two ways at  $v_2$ relative to the other two edge segments, \textit{outside} the cusp, or \textit{inside} the cusp.

\begin{enumerate}
\item If the cut edge connects outside the cusp, then the stabilizations of the cut edge can be removed by flipping $U_2$ horizontally, as in Figure \ref{flip1}.
The other type of stabilization is solved by reflecting the diagrams.
After undoing the additional stabilizations, the graph is in one of the standard forms $AA$ or $BA$, depending on how the cut edge sits at $v_1$ relative to the other two edge segments.

\item If the cut edge connects inside the cusp, then the stabilizations of the cut edge can be removed by flipping $U_2$ horizontally, as in Figure \ref{flip2}.
The other type of stabilization is solved by reflecting the diagrams.
After undoing the additional stabilizations, the graph is in one of the standard forms $AB$ or $BB$, depending on how the cut edge sits at $v_1$ relative to the other two edge segments.

\end{enumerate}

\end{proof}

\begin{remark}
If we replace the two unknotted cycles by cycles which are knots whose Legendrian type is determined by $tb$ and $rot$ the theorem still holds.
\end{remark}

\bibliographystyle{amsplain}

\end{document}